\newcommand{\listintertext}{\@ifstar\listintertext@\listintertext@@}
\newtheorem{theorem}{Theorem}[section]
\newtheorem{lemma}[theorem]{Lemma}
\newtheorem{proposition}[theorem]{Proposition}
\newtheorem{example}[theorem]{Example}
\theoremstyle{definition}
\newtheorem{definition}[theorem]{Definition}
\theoremstyle{remark}
\newtheorem{remark}[theorem]{Remark}
\numberwithin{equation}{section}
\title[]{Morphisms, direct sums and tensor products\\ of K\"ahler--Poisson algebras}
\author{Ahmed Al-Shujary}
\address{Dept. of Mathematics, Link\"oping Univeristy, 58183 Link\"oping, Sweden.}
\email{ahmed.al-shujary@liu.se}
\thanks{}
\subjclass[2000]{}
\keywords{}
\begin{document}

\maketitle
\begin{abstract}
  In this paper we introduce the concept of morphisms of
  K\"ahler-Poisson algebras and study their algebraic properties. In
  particular, we find conditions, in terms of the metric, for two
  algebras to be isomorphic, and we introduce direct sums and tensor
  products of K\"ahler-Poisson algebras. We provide detailed examples
  to illustrate the novel concepts.
\end{abstract}

\section{Introduction}

The study of geometry via Poisson algebras goes back to two centuries
ago through the works of Lagrange, Poisson and Lie. Poisson
\cite{Poisson} invented his brackets as a tool for classical dynamics,
Jacobi \cite{Jacobi} realized the importance of these brackets and
studied their algebraic properties, and Lie \cite{Lie} began the study
of their geometry. The study of geometry via Poisson algebras has
experienced an amazing development since the $1980^,$s, starting with
the foundational work of Weinstein \cite{weinstein} on Poisson
manifolds. Since then many authors have studied the geometric and
algebraic properties of symplectic and Poisson manifolds (see
e.g. \cite{brylinski,h:poisson.cohomology,h:extensions.rinehart,deformationquantization,Lichnerowicz}).
Later, Kontsevich \cite{deformationquantization} has shown that the
classification of formal deformations of the algebra of functions for
any manifold $\Sigma$ is equivalent to the classification of formal
families of Poisson structures on $\Sigma$.

However, metric aspects of Poisson manifolds have not been
investigated to the same extent. In \cite{algebras} we began to study
these metric aspects by defining K\"ahler-Poisson algebras as
algebraic analogues of the algebra of functions on K\"ahler manifolds
(the concept of K\"ahler manifolds was introduced by E. K\"ahler
\cite{E.K}). This study of metric aspects was motivated by the results
in \cite{Pseudo-Riemannian,multi-linear}, where many aspects of the
differential geometry of embedded Riemannian manifolds $\Sigma$ can be
formulated in terms of the Poisson structure of the algebra of
functions of $\Sigma$. In \cite{algebras} we showed that ``the
K\"ahler--Poisson condition'', being the crucial identity in the
definition of K\"ahler-Poisson algebras, allowed for an identification of
geometric objects in the Poisson algebra which share crucial
properties with their classical counterparts. For instance, we proved
the existence of a unique Levi-Civita connection on the module
generated by the inner derivations of the K\"ahler-Poisson algebra,
and show that the curvature operator has all the classical symmetries.
 
It is generally interesting to ask how many different (up to
isomorphism) K\"ahler-Poisson structures do there exist on a given
Poisson algebra?  The aim of this paper is to explore further
algebraic properties of K\"ahler-Poisson algebras. In particular, we
find appropriate definitions of morphisms of K\"ahler-Poisson
algebras. We illustrate with examples when two K\"ahler-Poisson
algebras are isomorphic, for instance, we begin by taking algebras
$\mathcal{A}$ and $\mathcal{A}^{\prime}$, where $\mathcal{A}$ is
finitely generated algebra, $\mathcal{A}^{\prime}=\mathcal{A}$ and we
consider different set of generators for the K\"ahler-Poisson algebra
structures of $\mathcal{A}$ and $\mathcal{A}^{\prime}$.  We then use
the concept of morphism to define subalgebras of K\"ahler-Poisson
algebras.  Again, we present examples to understand subalgebras of
K\"ahler-Poisson algebras. Finally, we introduce direct sums and
tensor products of K\"ahler-Poisson algebras together with their basic
properties.

\section{K\"ahler-Poisson algebras}

We begin this section by recalling the main object of our
investigation. In \cite{algebras} we introduce K\"ahler-Poisson
algebras over the field $\mathbb{K}$ (either $\mathbb{R}$ or
$\mathbb{C}$). Let us consider a Poisson algebra
$(\mathcal{A},\{.,.\})$, over a field $\mathbb{K}$ and let
$\{x^1,...,x^m\}$ be a set of distinguished elements of
$\mathcal{A}$. These elements play the role of functions providing an
embedding into $\mathbb{R}^m$ for K\"ahler manifolds. K\"ahler
manifolds and their properties have been extensively studied (see
e.g. \cite{brylinski,Lichnerowicz,weinstein,wellsjr}). Let us recall
the definition of K\"ahler-Poisson algebras together with a few basic
results.% \cite{algebras}.
\begin{definition} 
\label{def1}
Let $(\mathcal{A},\{\cdot,\cdot\})$ be a Poisson algebra over
$\mathbb{K}$ and let ${x^1,...,x^m} \in \mathcal{A}$. Given a
symmetric $m \times m$ matrix $g=(g_{ij})$ with entries
$g_{ij}\in\mathcal{A}$, for $i,j=1,...,m$, we say that the triple
$\mathcal{K}=(\mathcal{A},g,\{x^1,...,x^m\})$ is a \emph{K\"ahler--Poisson
algebra} if there exists $\eta\in\mathcal{A}$ such
that \begin{equation} \label{eq:2.1} \sum\limits_{i,j,k,l}^m
  \eta\{a,x^i\}g_{ij}\{x^j,x^k\}g_{kl}\{x^l,b\}=
  -\{a,b\} \end{equation} for all $a,b$ $\in$ $\mathcal{A}$. We call
equation \eqref{eq:2.1} \emph{``the K\"ahler--Poisson condition''}.
\end{definition}

Given a K\"ahler-Poisson algebra
$\mathcal{K}=(\mathcal{A},g,\{x^1,...,x^m\})$, let $\mathfrak{g}$
denote the $\mathcal{A}$-module generated by all
$\emph{inner derivations}$ , i.e.
\begin{center}  
$\mathfrak{g}=\{a_1\{c^1,.\}+...+a_N\{c^N,.\}:$ $a_i,c^i \in \mathcal{A}$ and $N\in \mathbb{N}\}$. 
\end{center}
It is a standard fact that $\mathfrak{g}$ is a Lie algebra over
$\mathbb{K}$ with respect to the bracket
\begin{center}
$[\alpha,\beta](a)=\alpha(\beta(a))-\beta(\alpha(a))$,
\end{center}
where $\alpha,\beta \in \mathfrak{g}$ and $a \in \mathcal{A}$ (see e.g.\cite{helgason,Lee}).

It was shown in \cite{algebras} that $\mathfrak{g}$ is a projective
module and that every K\"ahler--Poisson algebra is a Lie-Rinehart
algebra. More details for Lie-Rinehart algebra can be found in
\cite{h:extensions.rinehart,rinehart}. Moreover, it follows that the matrix
$g$ defines a metric on $\mathfrak{g}$ by \begin{center}
  $g(\alpha,\beta)=\alpha(x^i)g_{ij}\beta(x^j)$,
\end{center}
for $\alpha,\beta \in \mathfrak{g}$.

Let us now introduce some notation for  K\"ahler--Poisson algebras. We set
 \begin{center}
 $\mathbb{\mathcal{P}}^{ij}$=$\{x^i,x^j\}$  
\end{center} 
and
\begin{center}
	$\mathbb{\mathcal{P}}^i(a)$=$\{x^i,a\}$ 
\end{center} for $a\in \mathcal{A}$, as well as

\begin{center}
	$\mathcal{D}^{ij}=\eta \{x^i,x^l\}g_{lk}\{x^j,x^k\}$ \\ $\mathcal{D}^{i}(a) =\eta \{x^k,a\}g_{kl}\{x^l,x^i\}$.
\end{center}
Note that $\mathcal{D}^{ij}=\mathcal{D}^{ji}$.  The metric $g$ will be
used to lower indices in analogy with differential
geometry. E.g.
\begin{center}
  $\mathcal{P}^i_{\;\;j}=\mathcal{P}^{ik}g_{kj} \quad
  \mathcal{D}^i_{\;\;j}=\mathcal{D}^{ik}g_{kj} \quad
  \mathcal{D}_i=g_{ij}\mathcal{D}^j$.
\end{center}
In this notation, \eqref{eq:2.1} can be stated as 
\begin{equation}
  \mathcal{D}_{i}(a)\mathcal{P}^{i}(b)=\{a,b\}.
\end{equation}
Furthermore, one immediately derives the following identities 
\begin{equation} 
\mathcal{D}^{ij}\mathcal{P}_j(a)=\mathcal{P}^i(a),\\\\\\\ \mathcal{P}^{ij}\mathcal{D}_j(a)=\mathcal{P}^i(a)\\\\\ \text{and}    \\\\\  \mathcal{D}^{i}_j\mathcal{D}^{jk}=\mathcal{D}^{jk}.
\end{equation} 
If we denote by $\mathcal{P}$ the matrix with entries
$\mathcal{P}^{ij}$, the K\"ahler-Poisson condition \eqref{eq:2.1} in
Definition \ref{def1}, can be written in matrix notation as
\begin{center}
$\eta\mathcal{P}g\mathcal{P}g\mathcal{P}=-\mathcal{P}$,
\end{center}
for an algebra $\mathcal{A}$ generated by $\{x^1,...,x^m\}$.

Let us also recall Example 6.1 in \cite{algebras} (see also
\cite{thesis}). This example shows that any Poisson algebra generated
by two elements can be endowed with a K\"ahler-Poisson algebra structure.
\begin{example}\cite{algebras}
\label{example.two elements}
Let $\mathcal{A}$ be a Poisson algebra generated by two elements $x$ and $y\in\mathcal{A}$ and let
\begin{center} 
$\mathcal{P}=\begin{pmatrix}
 0 & \{x,y\}  \\
  -\{x,y\} & 0 \\
\end{pmatrix}$.\end{center} It is easy to check that for an arbitrary symmetric matrix with ($\det(g) \neq 0$)
\begin{center}
 $g=\begin{pmatrix}
 a & c  \\
  c & b \\
\end{pmatrix}$
\end{center}
one obtains
\begin{align*}
  \mathcal{P}g\mathcal{P}g &=\begin{pmatrix}
    c\{x,y\} & b\{x,y\}  \\
    -a\{x,y\}  & -c\{x,y\} \\
  \end{pmatrix}  
  \begin{pmatrix}
    c\{x,y\} & b\{x,y\}  \\
    -a\{x,y\}  & -c\{x,y\} \\
  \end{pmatrix}  
  \\&=\begin{pmatrix}
    (c\{x,y\})^2-ab\{x,y\}^2& 0 \\
    0 & 	(c\{x,y\})^2-ab\{x,y\}^2\\
  \end{pmatrix}.
\end{align*}
Therefore
\begin{align*}
  \mathcal{P}g\mathcal{P}g\mathcal{P} &=\begin{pmatrix}
    0& \{x,y\}(	(c\{x,y\})^2-ab\{x,y\}^2)  \\
    -\{x,y\}(	(c\{x,y\})^2-ab\{x,y\}^2)  & 0\\
  \end{pmatrix} 
  \\&= -\{x,y\}^2(ab-c^2)\mathcal{P}
  \\&= -\{x,y\}^2 \det(g)\mathcal{P},
\end{align*} 
giving $\eta=(\{x,y\}^2\det(g))^{-1}$, provided that the inverse
exists. Thus, as long as $\{x,y\}^2 \det(g)$ is not a zero-divisor,
one may localize $\mathcal{A}$ to obtain a K\"ahler-Poisson algebra
\begin{center}
$\mathcal{K}=(\mathcal{A}[(\{x,y\}^2\det(g))^{-1}],g,\{x,y\})$.
\end{center}
\end{example}

\section{Homomorphisms of K\"ahler-Poisson algebras} 

In this section we are interested in studying maps between
K\"ahler-Poisson algebras. Given a Poisson algebra, we will
investigate isomorphism classes of K\"ahler-Poisson algebra structures
on the Poisson algebra. As the definition of a K\"ahler-Poisson
algebra involves the choice of a set of distinguished elements, we
will require a morphism to respect the subalgebra generated by these
elements. To this end, we start by making the following definition:

\begin{definition}
  Given a K\"ahler-Poisson algebra
  $\mathcal{K}=(\mathcal{A},g,\{x^1,...,x^m\})$ on a Poisson algebra
  $(\mathcal{A},\{\cdot,\cdot\})$, let
  $\mathcal{A}_\text{fin} \subseteq \mathcal{A}$ denote the Poisson
  subalgebra generated by $\{x^1,...,x^m\}$.
\end{definition}

\noindent
We now introduce morphisms of  K\"ahler-Poisson algebras in the following way:
 
\begin{definition}
\label{definition}
Let $\mathcal{K}=(\mathcal{A},g,\{x^1,...,x^m\})$ and
$\mathcal{K}^{\prime}=(\mathcal{A}^{\prime},g^{\prime},\{y^1,...,{y^m}^{\prime}\})$
be K\"ahler-Poisson algebras together with their modules of
derivations $\mathfrak{g}$ and $\mathfrak{g}^{\prime}$, respectively.
A (homo)morphism of K\"ahler-Poisson algebras is a pair of maps
$(\phi,\psi)$, with $\phi:\mathcal{A}\rightarrow \mathcal{A}^{\prime}$
a Poisson algebra homomorphism and
$\psi:\mathfrak{g}\rightarrow \mathfrak{g}^{\prime}$ a Lie algebra
homomorphism, such that

\begin{enumerate}
  \item \label{1}$\psi(a\alpha)=\phi(a)\psi(\alpha)$,
  \item \label{2}$\phi(\alpha(a))=\psi(\alpha)(\phi(a))$,
   \item \label{3}$\phi(g(\alpha,\beta))=g^{\prime}(\psi(\alpha),\psi(\beta))$,
   \item \label{4}$\phi(\mathcal{A}_\text{fin})\subseteq \mathcal{A}^{\prime}_{\text{fin}} $,
\end{enumerate}

for all $a \in \mathcal{A}$ and $\alpha,\beta \in \mathfrak{g}$.

\end{definition}

\begin{remark}
  Note that a morphism of K\"ahler-Poisson algebras is also a morphism
  of the underlying Lie-Rinehart algebras.
\end{remark}

Furthermore, in next Proposition we show that the composition of two
K\"ahler-Poisson algebras morphisms is a morphism of K\"ahler-Poisson
algebras as we expect and require.

\begin{proposition}
\label{pro.3.6}
Let $\mathcal{K}=(\mathcal{A},g,\{x^1,...,x^m\})$ and
$\mathcal{K}^{\prime}=(\mathcal{A}^{\prime},g^{\prime},\{y^1,...,{y^m}^{\prime}\})$
be K\"ahler-Poisson algebras together with their corresponding modules
of derivations $\mathfrak{g}$, $\mathfrak{g}^{\prime}$ and let
$\mathcal{K^{\prime\prime}}=(\mathcal{A^{\prime\prime}},g^{\prime\prime},\{z^1,...,z^m{^{\prime\prime}}\})$
be a K\"ahler-Poisson algebra with its module
of derivations $\mathfrak{g}^{\prime\prime}$. If
$(\phi,\psi):\mathcal{K}\rightarrow \mathcal{K}^{\prime}$ and
$(\phi^{\prime},\psi^{\prime}):\mathcal{K}^{\prime}\rightarrow
\mathcal{K}^{\prime\prime}$ are homomorphisms of K\"ahler-Poisson
algebras then
$(\phi^{\prime}\circ \phi,\psi^{\prime}\circ
\psi):\mathcal{K}\rightarrow \mathcal{K}^{\prime\prime}$ is a
homomorphism of K\"ahler-Poisson algebras.
\end{proposition}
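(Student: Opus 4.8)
The plan is to verify directly that the pair $(\phi'\circ\phi,\psi'\circ\psi)$ satisfies every requirement of Definition \ref{definition}. Since all the conditions are ``functorial'' equalities (or, in the last case, an inclusion), the argument in each case amounts to applying the relevant property of $(\phi,\psi)$ first and that of $(\phi',\psi')$ second, chaining the two together through the intermediate algebra $\mathcal{K}'$. Before turning to the numbered conditions I would record the two standard preliminary facts: the composite $\phi'\circ\phi\colon\mathcal{A}\to\mathcal{A}''$ of Poisson algebra homomorphisms is again a Poisson algebra homomorphism, and the composite $\psi'\circ\psi\colon\mathfrak{g}\to\mathfrak{g}''$ of Lie algebra homomorphisms is again a Lie algebra homomorphism. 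The domains and codomains match: $\psi$ lands in $\mathfrak{g}'$, which is exactly the domain of $\psi'$, so the composite is well defined.

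Next I would check the four conditions one at a time. For clause \eqref{1}, given $a\in\mathcal{A}$ and $\alpha\in\mathfrak{g}$, I would compute
\[
(\psi'\circ\psi)(a\alpha)=\psi'\bigl(\phi(a)\psi(\alpha)\bigr)=\phi'(\phi(a))\,\psi'(\psi(\alpha))=(\phi'\circ\phi)(a)\,(\psi'\circ\psi)(\alpha),
\]
using clause \eqref{1} for $(\phi,\psi)$ in the first step and for $(\phi',\psi')$ in the second. Clauses \eqref{2} and \eqref{3} follow by the identical pattern, inserting the intermediate object living in $\mathcal{K}'$: for \eqref{2} one passes through $\phi(\alpha(a))=\psi(\alpha)(\phi(a))$ and then applies $\phi'$ together with clause \eqref{2} of the second morphism; for \eqref{3} one passes through $\phi(g(\alpha,\beta))=g'(\psi(\alpha),\psi(\beta))$ and then applies $\phi'$ together with the analogous identity relating $g'$ and the metric $g''$.

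Finally, clause \eqref{4} is a chain of set inclusions: since $\phi(\mathcal{A}_{\text{fin}})\subseteq\mathcal{A}'_{\text{fin}}$ and $\phi'(\mathcal{A}'_{\text{fin}})\subseteq\mathcal{A}''_{\text{fin}}$, applying the monotone map $\phi'$ to the first inclusion gives
\[
(\phi'\circ\phi)(\mathcal{A}_{\text{fin}})=\phi'\bigl(\phi(\mathcal{A}_{\text{fin}})\bigr)\subseteq\phi'(\mathcal{A}'_{\text{fin}})\subseteq\mathcal{A}''_{\text{fin}}.
\]

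I do not anticipate any genuine obstacle here: the statement is essentially the assertion that K\"ahler--Poisson algebras with these morphisms form a category, and each defining axiom composes transparently. The only point demanding a little care is bookkeeping, namely keeping track of which metric ($g$, $g'$, or $g''$) and which module ($\mathfrak{g}$, $\mathfrak{g}'$, or $\mathfrak{g}''$) each symbol refers to at each stage, so that the intermediate quantities in $\mathcal{K}'$ are inserted in the correct slots.
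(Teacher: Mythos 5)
Your proposal is correct and follows essentially the same route as the paper's own proof: a direct verification that each of the four conditions in Definition \ref{definition} holds for the composite pair, obtained by applying the corresponding property of $(\phi,\psi)$ and then of $(\phi^{\prime},\psi^{\prime})$. The only cosmetic difference is that you chain equalities directly while the paper shows the relevant differences vanish, and you additionally make explicit the (standard) preliminary facts that composites of Poisson and Lie algebra homomorphisms are again homomorphisms.
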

\begin{proof}
  Let $\hat{\phi}=\phi^{\prime}\circ\phi$ and
  $\hat{\psi}=\psi^{\prime}\circ\psi$, where
  $\phi:\mathcal{A}\rightarrow \mathcal{A}^{\prime}$,
  $\phi^{\prime}:\mathcal{A}^{\prime} \rightarrow
  \mathcal{A}^{\prime\prime}$,
  $\psi:\mathfrak{g}\rightarrow \mathfrak{g}^{\prime}$ and
  $\psi^{\prime}:\mathfrak{g}^{\prime}\rightarrow
  \mathfrak{g}^{\prime\prime}$, giving
  $\hat{\phi}:\mathcal{A} \rightarrow \mathcal{A}^{\prime\prime}$ and
  $\hat{\psi}:\mathfrak{g}\rightarrow \mathfrak{g}^{\prime\prime}$. To
  prove that $(\hat{\phi},\hat{\psi})$ is a homomorphism of
  K\"ahler-Poisson algebras we show that $(\hat{\phi},\hat{\psi})$
  satisfies properties (1-4) in Definition \ref{definition}.

  (1) $\hat{\psi}(a\alpha)=\hat{\phi}(a)\hat{\psi}(\alpha)$, for all
  $\alpha \in \mathfrak{g}$ and $a \in \mathcal{A}$.
  \begin{align*} 
    \psi^{\prime}(\psi(a\alpha))-\phi^{\prime}(\phi(a))\psi^{\prime}(\psi(\alpha))&=\psi^{\prime}(\phi(a)\psi(\alpha))-\phi^{\prime}(\phi(a))\psi^{\prime}(\psi(\alpha))\\&=\phi^{\prime}(\phi(a))\psi^{\prime}(\psi(\alpha))-\phi^{\prime}(\phi(a))\psi^{\prime}(\psi(\alpha))=0.
  \end{align*}

  (2) $\hat{\phi}(\alpha(a))=\hat{\psi}(\alpha)(\hat{\phi}(a))$, for all $\alpha \in \mathfrak{g}$ and $a \in \mathcal{A}$. 
  \begin{align*} 
    \hat{\phi}(\alpha(a))-\hat{\psi}(\alpha)(\hat{\phi}(a)) &=\phi^{\prime}(\phi(\alpha(a)))-\psi^{\prime}(\psi(\alpha))(\phi^{\prime}(\phi(a)))\\&=\phi^{\prime}(\psi(\alpha)(\phi(a)))-\psi^{\prime}(\psi(\alpha))(\phi^{\prime}(\phi(a)))\\&=\psi^{\prime}(\psi(\alpha)(\phi^{\prime}(\phi(a)))-\psi^{\prime}(\psi(\alpha))(\phi^{\prime}(\phi(a)))=0.
  \end{align*}
  
  (3) $\hat{\phi}(g(\alpha,\beta))=g^{\prime\prime}(\hat{\psi}(\alpha),\hat{\psi}(\beta))$, for $\alpha,\beta \in \mathcal{A}$. 
  \begin{align*} 
    \hat{\phi}(g(\alpha,\beta))-g^{\prime\prime}(\hat{\psi}(\alpha),\hat{\psi}(\beta)) &=\phi^{\prime}(\phi(g(\alpha,\beta)))-g^{\prime\prime}(\psi^{\prime}(\psi(\alpha)),\psi^{\prime}(\psi(\beta)))\\&=\phi^{\prime}(g^{\prime}(\psi(\alpha),\psi(\beta)))-g^{\prime\prime}(\psi^{\prime}(\psi(\alpha)),\psi^{\prime}(\psi(\beta)))\\&=g^{\prime\prime}(\psi^{\prime}(\psi(\alpha)),\psi^{\prime}(\psi(\beta)))-g^{\prime\prime}(\psi^{\prime}(\psi(\alpha)),\psi^{\prime}(\psi(\beta)))=0.
  \end{align*} 
  
  (4) $\phi^{\prime}(\phi(\mathcal{A}_\text{fin}))\subseteq \mathcal{A}^{\prime\prime}_{\text{fin}} $, since, $\phi(\mathcal{A}_\text{fin})\subseteq \mathcal{A}^{\prime}_{\text{fin}} $ and $\phi^{\prime}(\mathcal{A}^{\prime}_\text{fin})\subseteq \mathcal{A}^{\prime\prime}_{\text{fin}} $.\\
  Therefore, $(\hat{\phi},\hat{\psi})$ is a homomorphism of
  K\"ahler-Poisson algebras.
\end{proof}

\begin{remark}
  Note that, the composition of K\"ahler-Poisson algebra morphism is
  associative and the identity
  $(\operatorname{id}_\mathcal{A},\operatorname{id}_\mathfrak{g})$ of
  $\mathcal{K}=(\mathcal{A},g,\{x^1,...,x^m\})$ with
  $\operatorname{id}_\mathcal{A}:\mathcal{A}\rightarrow \mathcal{A}$
  and
  $\operatorname{id}_\mathfrak{g}:\mathfrak{g}\rightarrow
  \mathfrak{g}$ is a morphism of K\"ahler-Poisson algebras.
\end{remark}

\begin{remark}
\label{remark3.6}
An isomorphism of K\"ahler-Poisson algebras is a morphism
$(\phi,\psi)$ of K\"ahler-Poisson algebras such that $\phi$ is a
Poisson algebra isomorphism and
$\phi(\mathcal{A}_\text{fin})= \mathcal{A}^{\prime}_{\text{fin}}
$. Observe that, $\psi$ can always be constructed from $\phi$. When
two K\"ahler-Poisson algebras $\mathcal{K}$ and $\mathcal{K}^{\prime}$
are isomorphic, we write $\mathcal{K}\cong \mathcal{K}^{\prime}$.
\end{remark}

Before continuing, we need to introduce some notation. Let
$(\mathcal{A},\{.,.\})$ and $(\mathcal{A}^{\prime},\{.,.\}^{\prime})$
be Poisson algebras and let $x^i\in \mathcal{A}$ for $i=1,...,m.$ If
$p \in \mathcal{A}$ is a polynomial in $\{x^1,...,x^m\}$ then, using
Leibniz rule, one computes \begin{equation}
\label{equation}
\{p,a\}=\frac{\partial {p}}{\partial {x^i}}\{x^i,a\}
\end{equation}
where $\frac{\partial{p}}{\partial{x^i}}$ denotes the formal
derivative of the polynomial $p$ with respect to the variable
$x^i$. Note that, in general, $\frac{\partial{p}}{\partial{x^i}}$ is
itself not well-defined in the algebra, since there might exist
several different (but equivalent) representations of $p$ as a
polynomial in $x^1,...,x^m$, and the formal derivative then yields
several, possibly non-equivalent, elements of the algebra. However,
the combination in \eqref{equation} is always well-defined, and gives
the same result for all representations of $p$.

Given a matrix $M=(m_{ij})$ over $\mathcal{A}$, we set
$\phi(M)=(\phi(m_{ij}))$.  Given a morphism
$(\phi,\psi):(\mathcal{A},g,\{x^1,...,x^m\})\rightarrow
(\mathcal{A}^{\prime},g^{\prime},\{y^1,...,{y^m}^{\prime}\})$, it will
be convenient to introduce the notation
\begin{center}
  ${A^i}_\alpha=\frac{\partial \phi(x^i)}{\partial y^\alpha}$
\end{center}
(keeping in mind that this is not well-defined by itself); recall that
if $(\phi,\psi)$ is a morphism of K\"ahler-Poisson algebras, then
$\phi(\mathcal{A}_\text{fin})\subseteq
\mathcal{A}^{\prime}_{\text{fin}}$, ensuring that $\phi(x^i)$ is
indeed a polynomial in $y^1,...,{y^m}^{\prime}$. This notation allows
us to write \begin{center}
  $\phi(\{x^i,x^j\})=\{\phi(x^i),\phi(x^j)\}^{\prime}={A^i}_\alpha\{y^\alpha,y^\beta\}^\prime{A^j}_\beta$
\end{center}
in matrix notation: \begin{center}
$\phi(\mathcal{P})=A\mathcal{P}^\prime A^T$,
\end{center}
where $\mathcal{P}=(\{x^i,x^j\})$ and $\mathcal{P}^\prime=(\{y^\alpha,y^\beta\}^\prime)$.

Given two K\"ahler-Poisson algebras
$\mathcal{K}=(\mathcal{A},g,\{x^1,...,x^m\})$ and
$\mathcal{K}^{\prime}=(\mathcal{A}^{\prime},g^{\prime},\{y^1,...,{y^m}^{\prime}\})$,
we would like to understand when they are isomorphic. In the
following, we shall consider a number of examples in order to explore
when K\"ahler-Poisson algebras are isomorphic. From now on, we
consider only finitely generated Poisson algebras that have been
properly localized (cf. Example \ref{example.two elements}) to allow
the construction of a K\"ahler-Poisson algebra. We begin by taking
finitely generated algebras $\mathcal{A}$ and $\mathcal{A}^{\prime}$,
where $\mathcal{A}=\mathcal{A}^{\prime}$ and we consider different set
of generators for the K\"ahler-Poisson algebra structures of
$\mathcal{A}$ and $\mathcal{A}^{\prime}$. Let us start with the
following example.

\begin{example} 
\label{example4.0.5}
Let $(\mathcal{A},\{.,.\})$ be a Poisson algebra generated by two elements $x$ and $y$. From Example ~\ref{example.two elements} we know that
$\mathcal{K}=(\mathcal{A}[(\{x,y\}^2\det(g))^{-1}],g,\{x,y\})$
is a K\"ahler-Poisson algebra for an arbitrary symmetric matrix
\begin{center} $g= \begin{pmatrix}
    g_{11} & g_{12} \\
    g_{12} & g_{22}\\
    
  \end{pmatrix} $,  $\det(g)\neq 0$ with $\eta=\big(\{x,y\}^{2}\det (g)\big)^{-1}$.
\end{center}   
Consider $\mathcal{K}^{\prime}=(\mathcal{A}[(\{x,y\}^2\det(g))^{-1}],h,\{x+y,x-y\})$, with a symmetric matrix \begin{center}
  $h= \begin{pmatrix}
    h_{11} & h_{12}   \\
    h_{12} & h_{22}   \\
  \end{pmatrix} $, $\det(h)\neq 0$.
\end{center} 
We have
\begin{align*}  \mathcal{P}^{\prime}= \begin{pmatrix}
    \{x+y,x+y\} & \{x+y,x-y\} \\
    \{x+y,x-y\} &\{x-y,x-y\}\\
  \end{pmatrix}=\begin{pmatrix}
    0 & -2\{x,y\} \\
    2\{x,y\} &0\\
  \end{pmatrix}. \end{align*} 
It is easy to check that for the symmetric matrix $h$ 
we obtain   
\begin{equation}
\label{equation4.0.5}
\mathcal{P}^{\prime}h\mathcal{P}^{\prime}h\mathcal{P}^{\prime}=-4\{x,y\}^2 \det(h)\mathcal{P}^{\prime},
\end{equation} 
giving $\eta^\prime=(4\{x,y\}^2 \det(h))^{-1}$.  For each metric $g$
in the definition of $\mathcal{K}$, we find a suitable matrix $h$ such
that $\mathcal{K}\cong \mathcal{K}^{\prime}$.  From
\begin{center}
  $\mathfrak{g}=\{a_1\{x,\cdot\}+a_2\{y,\cdot\}:a_1,a_2\in \mathcal{A}\}$,
\end{center}
and
\begin{align*}
  \mathfrak{g}^{\prime}&=\{a_1\{x+y,\cdot\}+a_2\{x-y,\cdot\}:a_1,a_2\in \mathcal{A}\}\\&=\{a_1\{x,\cdot\}+a_1\{y,\cdot\}+a_2\{x,\cdot\}-a_2\{y,\cdot\}: a_1,a_2 \in \mathcal{A}\}\\&=\{(a_1+a_2)\{x,\cdot\}+(a_1-a_2)\{y,\cdot\}: a_1,a_2\in \mathcal{A}\},
\end{align*}
we see that $\mathfrak{g}=\mathfrak{g}^{\prime}$.  Since we require
that $\mathcal{K}\cong \mathcal{K}^{\prime}$, we define maps
$\phi:\mathcal{A}\rightarrow \mathcal{A}$ and
$\psi:\mathfrak{g}\rightarrow \mathfrak{g}$ satisfying properties
(1-4).  We choose $\phi=id$ , $\psi=id$ and find a suitable choice of
matrix $h$ yielding that $(\phi,\psi)$ is an isomorphism of K\"ahler
Poisson algebras. Now, we check that properties (1-4) in Definition
\ref{definition} are satisfied.

$(1)$   $\phi(a)\psi(\alpha)=a\alpha=\psi(a\alpha)$

$(2)$ $\psi(\alpha)(\phi(a))=\alpha(a)=\phi(\alpha(a))$

$(3)$ To show that $g(\alpha,\beta)=h(\alpha,\beta)$, we
start from the left hand side 
\begin{align*}
  \phi(g(\alpha,\beta))&=\alpha(x^i)g_{ij}\beta(x^j)\\&= \alpha(x)g_{11}\beta(x)+\alpha(x)g_{12}\beta(y)+\alpha(y)g_{21}\beta(x)+\alpha(y)g_{22}\beta(y).
\end{align*} 
From the right hand side we get 
\begin{align*}
  h(\alpha,\beta) &= \alpha(x^i)h_{ij}\beta(x^j)\\&=\alpha(x+y)h_{11}\beta(x+y)+\alpha(x+y)h_{12}\beta(x-y)+\alpha(x-y)h_{21}\beta(x+y)\\& \quad+\alpha(x-y)h_{22}\beta(x-y)
  \\&=\alpha(x)h_{11}\beta(x)+\alpha(x)h_{11}\beta(y)+\alpha(y)h_{11}\beta(x)+\alpha(y)h_{11}\beta(y)+\alpha(x)h_{12}\beta(x)+\\& \quad -\alpha(x)h_{12}\beta(y)+\alpha(y)h_{12}\beta(x)-\alpha(y)h_{12}\beta(y)+\alpha(x)h_{21}\beta(x)+\alpha(x)h_{21}\beta(y)\\&\quad-\alpha(y)h_{21}\beta(x)-\alpha(y)h_{21}\beta(y)+\alpha(x)h_{22}\beta(x)-\alpha(x)h_{22}\beta(y)-\alpha(y)h_{22}\beta(x)\\&\quad+\alpha(y)h_{22}\beta(y)\\&=\alpha(x)h_{11}\beta(x)+\alpha(x)h_{11}\beta(y)+\alpha(y)h_{11}\beta(x)+\alpha(y)h_{11}\beta(y)+2\alpha(x)h_{12}\beta(x)\\& \quad -2\alpha(y)h_{12}\beta(y)+\alpha(x)h_{22}\beta(x)-\alpha(x)h_{22}\beta(y)-\alpha(y)h_{22}\beta(x)+\alpha(y)h_{22}\beta(y).
\end{align*}
We conclude that
\begin{align} 
  &\begin{aligned}
    \label{aligned1}
    \alpha(x)\beta(x): h_{11}+2h_{12}+h_{22}=g_{11} 
  \end{aligned}\\
  &\begin{aligned}
    \label{aligned2}
    \alpha(x)\beta(y): h_{11}-h_{22}=g_{12}
  \end{aligned}\\
  &\begin{aligned}
    \alpha(y)\beta(x): h_{11}-h_{22}=g_{21}
  \end{aligned}\\
  &\begin{aligned}
    \label{aligned4}
    \alpha(y)\beta(y):  h_{11}-2h_{12}+h_{22}=g_{22}.
  \end{aligned}
\end{align} 
From \eqref{aligned4} we obtain, $ h_{11}=2h_{12}-h_{22}+g_{22} $ and
setting this in \eqref{aligned1} we get \begin{center}
  $ 2h_{12}-h_{22}+g_{22}+2h_{12}+h_{22}=g_{11}$\\$
  4h_{12}=g_{11}-g_{22}$ \\$h_{12}=\frac{g_{11}-g_{22}}{4}$.
\end{center}
From  \eqref{aligned2} we get  $h_{22}=h_{11}-g_{12}$,
which in \eqref{aligned1} gives \begin{center}
  $h_{11}+2(\frac{g_{11}-g_{22}}{4})+h_{11}-g_{12} =g_{11}$\\
  $2h_{11}+(\frac{g_{11}-g_{22}}{2})-g_{12} =g_{11}$\\
  $4h_{11}+g_{11}-g_{22} =2(g_{11}+g_{12})$\\
  $4h_{11}=g_{11}+2g_{12}+g_{22}$\\
  $h_{11}=\frac{g_{11}+2g_{12}+g_{22}}{4}$,
\end{center} 
which implies that
\begin{align*}
  h_{22}=h_{11}-g_{12}=\frac{g_{11}+2g_{12}+g_{22}}{4}-g_{12}=\frac{g_{11}-2g_{12}+g_{22}}{4}.
\end{align*}			
Now, the symmetric matrix $h$ becomes
\begin{center}
  $h= \begin{pmatrix}
    \frac{g_{11}+2g_{12}+g_{22}}{4} &\frac{g_{11}-g_{22}}{4}   \\
    \frac{g_{11}-g_{22}}{4} & \frac{g_{11}-2g_{12}+g_{22}}{4}   \\
  \end{pmatrix} $, 
\end{center}
giving
\begin{align*}
  \det(h)&=\frac{1}{16}\Big[(g_{11}+2g_{12}+g_{22})(g_{11}-2g_{12}+g_{22})-(g_{11}-g_{22})^2\Big]\\&=\frac{1}{16}\Big[(g_{11})^2-2g_{11}g_{12}+g_{11}g_{22}+2g_{11}g_{12}-4(g_{12})^2+2g_{12}g_{22}+g_{11}g_{22}\\&\quad-2g_{12}g_{22}+(g_{22})^2-(g_{11})^2-(g_{22})^2+2g_{11}g_{22}\Big]\\&=\frac{1}{16}(4g_{11}g_{22}-4(g_{12})^2)=\frac{1}{4}(g_{11}g_{22}-(g_{12})^2)=\frac{1}{4}\det(g).
\end{align*}
Inserting $\det(h)$ in equation ~\ref{equation4.0.5} we get
\begin{align*}
  \mathcal{P}^{\prime}h\mathcal{P}^{\prime}h\mathcal{P}^{\prime}=-4\{x,y\}^2 \frac{1}{4}\det(g)\mathcal{P}^{\prime}=-\{x,y\}^2\det(g)\mathcal{P}^{\prime},
\end{align*} giving $\eta^{\prime}=(\{x,y\}^2\det(g))^{-1}$. Therefore $\eta^\prime=\eta$.
We conclude that $g(\alpha,\beta)=h(\alpha,\beta)$.

$(4)$ It is easy to see that $\phi(\mathcal{A}_\text{fin})\subseteq \mathcal{A}^{\prime}_{\text{fin}} $.\\
Hence, $\mathcal{K}\cong \mathcal{K}^{\prime}$ if
\begin{align*}
  h=
  \begin{pmatrix}
    \frac{g_{11}+2g_{12}+g_{22}}{4} &\frac{g_{11}-g_{22}}{4}   \\
    \frac{g_{11}-g_{22}}{4} & \frac{g_{11}-2g_{12}+g_{22}}{4}   \\
  \end{pmatrix}. 
\end{align*}
\end{example}

Note that the above example extends to the case where we choose more
(dependent) generators of the algebra, giving many possible
presentations of the same K\"ahler-Poisson algebra.  Next, let us
explore the case when we choose a different number of generators for
the same algebra.
\begin{example} 
  Let $(\mathcal{A},\{\cdot,\cdot\})$ be a Poisson algebra generated
  by two elements $x$ and $y$. Consider the K\"ahler-Poisson algebra
  $\mathcal{K}=(\mathcal{A}[(\{x,y\}^2\det(g))^{-1}],g,\{x,y\})$, with
  an arbitrary symmetric matrix
  \begin{center} $g= \begin{pmatrix}
      g_{11} & g_{12} \\
      g_{12} & g_{22}\\
      
    \end{pmatrix}$, $\det(g)\neq 0$ and 
    $\eta=\big(\{x,y\}^{2}\det (g)\big)^{-1}$.
  \end{center}   
  Let $\mathcal{K}^{\prime}=(\mathcal{A}[(\{x,y\}^2\det(g))^{-1}],h,\{x,y,x\})$ with
  \begin{center} 
    $\mathcal{P}^{\prime}= \begin{pmatrix}
      0 & \lambda & 0  \\
      -\lambda &0 & -\lambda \\
      0 & \lambda & 0 \\
    \end{pmatrix} $, 	
  \end{center}
  where $\lambda=\{x,y\}$.  It is easy to check that for the symmetric
  matrix h
  \begin{center}
    $h= \begin{pmatrix}
      \frac{1}{4}g_{11} & \frac{1}{2}g_{12} & \frac{1}{4}g_{11}  \\
      \frac{1}{2}g_{12} & g_{22} & \frac{1}{2}g_{12}  \\
      \frac{1}{4}g_{11} & \frac{1}{2}g_{12} & \frac{1}{4}g_{11}  \\
    \end{pmatrix} $, 
  \end{center}
  one obtains
  $\mathcal{P}^{\prime}h\mathcal{P}^{\prime}h\mathcal{P}^{\prime}=-\{x,y\}^2
  \det(g)\mathcal{P}^{\prime}$, giving
  $\eta^{\prime}=(\{x,y\}^2\det(g))^{-1}$. We conclude that
  $\eta^{\prime}=\eta$.  Now, we show that
  $\mathcal{K}\cong \mathcal{K}^{\prime}$. From
  \begin{center}
    $\mathfrak{g}=\{a_1\{x,\cdot\}+a_2\{y,\cdot\}:a_1,a_2\in \mathcal{A}\}$
  \end{center}
  and
  \begin{align*}
    \mathfrak{g}^{\prime}&=\{a_1\{x,\cdot\}+a_2\{y,\cdot\}+a_3\{x,\cdot\}:a_1,a_2,a_3\in \mathcal{A}\}\\&=\{(a_1+a_3)\{x,\cdot\}+a_2\{y,\cdot\}: a_1,a_2,a_3 \in \mathcal{A}\},
  \end{align*}
  we see that $\mathfrak{g}=\mathfrak{g}^{\prime}$.  We define maps
  $\phi:\mathcal{A}\rightarrow \mathcal{A}$ and
  $\psi:\mathfrak{g}\rightarrow \mathfrak{g}$ by choosing $\phi=id$
  and $\psi=id$, and we find a suitable choice of matrix $h$ such that
  $(\phi,\psi)$ is an isomorphism of K\"ahler Poisson algebras. We
  again check properties (1-4) in Definition \ref{definition} are
  satisfied.
	
  $(1)$ $\phi(a)\psi(\alpha)=a\alpha=\psi(a\alpha)$.
  
  $(2) $ $\psi(\alpha)(\phi(a))=\alpha(a)=\phi(\alpha(a))$.
  
  $(3)$ To show that $\phi(g(\alpha,\beta))=h(\psi(\alpha),\psi(\beta))$, we
  start from the right hand side 
  \begin{align}\label{eq.3.7}
    h(\psi(\alpha),\psi(\beta)) & \nonumber=h(\alpha,\beta)\\& =\nonumber \alpha(x^i)h_{ij}\beta(x^j)\\&=\nonumber\alpha(x)h_{11}\beta(x)+\alpha(x)h_{12}\beta(y)+\alpha(y)h_{13}\beta(x)+\alpha(y)h_{21}\beta(x)+\alpha(y)h_{22}\beta(y)
    \\& \nonumber \quad+\alpha(y)h_{23}\beta(x)+\alpha(x)h_{31}\beta(x)+\alpha(x)h_{32}\beta(y)+\alpha(x)h_{33}\beta(x)
    \\&=\nonumber\frac{1}{4}\alpha(x)g_{11}\beta(x)+\frac{1}{2}\alpha(x)g_{12}\beta(y)+\frac{1}{4}\alpha(x)g_{11}\beta(x)+\frac{1}{2}\alpha(y)g_{12}\beta(x)+\alpha(y)g_{22}\beta(y)\\& \nonumber \quad+\frac{1}{2}\alpha(y)g_{12}\beta(x)+\frac{1}{4}\alpha(x)g_{11}\beta(x)+\frac{1}{2}\alpha(x)g_{12}\beta(y)+\frac{1}{4}\alpha(x)g_{11}\beta(x)
    \\&=\alpha(x)g_{11}\beta(x)+\alpha(x)g_{12}\beta(y)+\alpha(y)g_{12}\beta(x)+\alpha(y)g_{22}\beta(y).
  \end{align}
  From the left hand side we get
  \begin{align*}
    \phi(g(\alpha,\beta))&=\alpha(x^i)g_{ij}\beta(x^j)\\&= \alpha(x)g_{11}\beta(x)+\alpha(x)g_{12}\beta(y)+\alpha(y)g_{21}\beta(x)+\alpha(y)g_{22}\beta(y),
  \end{align*}
  and we conclude that $\phi(g(\alpha,\beta))=h(\psi(\alpha),\psi(\beta))$.
  
  $(4)$ It is easy to see that $\phi(\mathcal{A}_\text{fin})\subseteq \mathcal{A}^{\prime}_{\text{fin}} $.\\
  Therefore, $\mathcal{K}\cong \mathcal{K}^{\prime}$ for the matrix
  $h$ with entries as in \eqref{eq.3.7}.
\end{example}

The above examples show that two K\"ahler-Poisson algebras can be
isomorphic when considering different set of generators of the same
finitely generated Poisson algebra.

\subsection{Properties of isomorphisms}

We are interested in studying properties of more general isomorphisms
for K\"ahler-Poisson algebras. Let Let
$\mathcal{K}=(\mathcal{A},g,\{x^1,...,x^m\})$ and
$\mathcal{K}^{\prime}=(\mathcal{A}^{\prime},g^{\prime},\{y^1,...,{y^m}^{\prime}\})$
be K\"ahler-Poisson algebras, and assume that there exists a Poisson
algebra isomorphism
$\phi:(\mathcal{A},\{.,.\})\rightarrow
(\mathcal{A}^{\prime},\{.,.\}^{\prime})$. When does there exist a map
$\psi:\mathfrak{g}\rightarrow \mathfrak{g}^{\prime}$ such that
$(\phi,\psi)$ is an isomorphism of K\"ahler-Poisson algebras?. The
following result provides an answer to this question.

\begin{proposition}
\label{proposition4}
Let $\mathcal{K}=(\mathcal{A},g,\{x^1,...,x^m\})$ and
$\mathcal{K}^{\prime}=(\mathcal{A}^{\prime},g^{\prime},\{y^1,...,{y^m}^{\prime}\})$
be K\"ahler-Poisson algebras. Then $\mathcal{K}$ and $\mathcal{K}^{\prime}$
are isomorphic if and only if there exists a Poisson algebra
isomorphism $\phi:\mathcal{A}\rightarrow \mathcal{A}^{\prime}$ such
that $\phi(\mathcal{A}_\text{fin})= \mathcal{A}^{\prime}_\text{fin} $,
and
\begin{align}
  \label{properties of isomorphisms}
  \mathcal{P}^{\prime}g^{\prime}\mathcal{P}^{\prime}&=\mathcal{P}^{\prime}A^T\phi(g)A\mathcal{P}^{\prime},
\end{align} 
where ${A^i}_\alpha=\frac{\partial \phi(x^i)}{\partial y^\alpha}$ and
$(\mathcal{P}^{\prime})^{\alpha\beta}=\{y^\alpha,y^\beta\}^{\prime}$.
\end{proposition}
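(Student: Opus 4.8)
The plan is to prove the two implications separately, but both rest on one preliminary observation. Since the K\"ahler--Poisson condition can be written $\{a,b\}=\mathcal{D}_i(a)\mathcal{P}^i(b)$, every inner derivation satisfies $\{a,\cdot\}=\mathcal{D}_i(a)\{x^i,\cdot\}$, so $\mathfrak{g}$ is generated as an $\mathcal{A}$-module by $e_i:=\{x^i,\cdot\}$, and likewise $\mathfrak{g}^{\prime}$ by $f_\alpha:=\{y^\alpha,\cdot\}^{\prime}$. Consequently, in the presence of a morphism $(\phi,\psi)$ with $\phi(\mathcal{A}_\text{fin})\subseteq\mathcal{A}^{\prime}_\text{fin}$, property \eqref{2} together with the Leibniz rule \eqref{equation} forces $\psi$ to be determined by $\phi$: from $\phi(\{x^i,a\})=\{\phi(x^i),\phi(a)\}^{\prime}={A^i}_\alpha\{y^\alpha,\phi(a)\}^{\prime}$ and surjectivity of $\phi$ one reads off $\psi(e_i)={A^i}_\alpha f_\alpha$. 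This is the concrete content of the remark that $\psi$ can be constructed from $\phi$, and it is the form I will use throughout. I will also freely use the identity $\phi(\mathcal{P})=A\mathcal{P}^{\prime}A^T$ established before the statement, valid because $\phi(x^i)\in\mathcal{A}^{\prime}_\text{fin}$.

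For the reverse implication, assume $\phi$ is a Poisson isomorphism with $\phi(\mathcal{A}_\text{fin})=\mathcal{A}^{\prime}_\text{fin}$ and that \eqref{properties of isomorphisms} holds. I would \emph{define} $\psi$ on generators by $\psi(e_i)={A^i}_\alpha f_\alpha$ and extend $\mathcal{A}$-linearly via \eqref{1}. Well-definedness is the first check: if $\sum_i a_i e_i=0$ in $\mathfrak{g}$, then for every $b$ one has $\sum_i\phi(a_i){A^i}_\alpha\{y^\alpha,\phi(b)\}^{\prime}=\phi\big(\sum_i a_i\{x^i,b\}\big)=0$, and surjectivity of $\phi$ gives $\sum_i\phi(a_i)\psi(e_i)=0$. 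Then \eqref{1} and \eqref{4} hold by construction, \eqref{2} holds on generators by the displayed computation and extends linearly, and $\psi$ is automatically a Lie homomorphism since for $c=\phi(b)$ repeated use of \eqref{2} gives $\psi([\alpha,\beta])(c)=\phi([\alpha,\beta](b))=[\psi(\alpha),\psi(\beta)](c)$. The only substantial point is \eqref{3}; by $\mathcal{A}$-bilinearity it suffices to verify it on the pairs $e_i,e_j$, where, after substituting $\phi(\mathcal{P})=A\mathcal{P}^{\prime}A^T$ and using $(\mathcal{P}^{\prime})^T=-\mathcal{P}^{\prime}$, it reduces to
\[
A\big(\mathcal{P}^{\prime}g^{\prime}\mathcal{P}^{\prime}-\mathcal{P}^{\prime}A^T\phi(g)A\mathcal{P}^{\prime}\big)A^T=0,
\]
which is immediate from \eqref{properties of isomorphisms} by conjugating with $A$ and $A^T$.

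For the forward implication, let $(\phi,\psi)$ be an isomorphism, so $\phi(\mathcal{A}_\text{fin})=\mathcal{A}^{\prime}_\text{fin}$ by Remark \ref{remark3.6}; running the same generator computation of \eqref{3} backwards produces exactly the sandwiched identity displayed above. The main obstacle is to strip off the outer factors $A$ and $A^T$ and conclude that the bracketed matrix itself vanishes, which is \eqref{properties of isomorphisms}. This is precisely where I would exploit that $(\phi,\psi)$ is an \emph{isomorphism}: since $\psi$ is surjective and $\mathfrak{g}^{\prime}=\psi(\mathfrak{g})=\operatorname{span}_{\mathcal{A}^{\prime}}\{\psi(e_i)\}=\operatorname{span}_{\mathcal{A}^{\prime}}\{{A^i}_\alpha f_\alpha\}$, each $f_\alpha$ lies in this span, yielding a matrix $C$ over $\mathcal{A}^{\prime}$ with $f_\alpha=\sum_\beta (CA)_\alpha^{\ \beta}f_\beta$ as derivations. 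Evaluating on the generators $y^\gamma$ gives $CA\mathcal{P}^{\prime}=\mathcal{P}^{\prime}$, and transposing (with $(\mathcal{P}^{\prime})^T=-\mathcal{P}^{\prime}$) gives $\mathcal{P}^{\prime}A^TC^T=\mathcal{P}^{\prime}$. Writing $X:=\mathcal{P}^{\prime}g^{\prime}\mathcal{P}^{\prime}-\mathcal{P}^{\prime}A^T\phi(g)A\mathcal{P}^{\prime}=\mathcal{P}^{\prime}(g^{\prime}-A^T\phi(g)A)\mathcal{P}^{\prime}$ and conjugating the sandwiched identity $AXA^T=0$ by $C$ and $C^T$ collapses the outer factors, because $CA\mathcal{P}^{\prime}=\mathcal{P}^{\prime}$ and $\mathcal{P}^{\prime}A^TC^T=\mathcal{P}^{\prime}$ give $C(AXA^T)C^T=X$; hence $X=0$, which is exactly \eqref{properties of isomorphisms}. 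I expect this stripping step to be the delicate part, since $A$ is rectangular and cannot be inverted directly — the argument works only because surjectivity of $\psi$ supplies the left/right factor $C$ that reproduces $\mathcal{P}^{\prime}$.
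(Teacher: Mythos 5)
Your proof is correct and follows essentially the same route as the paper's: both directions hinge on the identity $\phi(\mathcal{P})=A\mathcal{P}^{\prime}A^{T}$ together with a matrix $C$ satisfying $CA\mathcal{P}^{\prime}=\mathcal{P}^{\prime}$ that strips the rectangular factors $A$, $A^{T}$ off the sandwiched identity. The only cosmetic difference is that you obtain $C$ abstractly from surjectivity of $\psi$ (i.e.\ from $\mathfrak{g}^{\prime}$ being generated over $\mathcal{A}^{\prime}$ by the $\psi(e_i)$), whereas the paper produces it explicitly as $\phi\big(\partial\phi^{-1}(y^{\alpha})/\partial x^{i}\big)$ via the Leibniz rule applied to $\phi^{-1}(y^{\alpha})\in\mathcal{A}_{\text{fin}}$.
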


\begin{proof}
  Assume that $\mathcal{K}\cong \mathcal{K}^{\prime}$. Then we need to
  show that
  \begin{align*}
    \mathcal{P}^{\prime}g^{\prime}\mathcal{P}^{\prime}&=\mathcal{P}^{\prime}A^T\phi(g)A\mathcal{P}^{\prime}.
  \end{align*}
  Let us start by computing $\phi(\mathcal{P}^{ij})$ 
  \begin{align*}
    \phi(\mathcal{P}^{ij})&=\phi(\{x^i,x^j\})=\{\phi(x^i),\phi(x^j)\}^{\prime}=\frac{\partial \phi(x^i)}{\partial y^\alpha}\{y^\alpha,\phi(x^i)\}^{\prime}\\&=\frac{\partial \phi(x^i)}{\partial y^\alpha}\{y^\alpha,y^\beta\}^{\prime}\frac{\partial \phi(x^j)}{\partial y^\beta}={A^i}_\alpha (\mathcal{P}^{\prime})^{\alpha \beta} {A^j}_\beta.
  \end{align*}
  Now, in order to prove \eqref{properties of isomorphisms}, we let $\gamma_i=\{x^i,.\}$ and $\gamma_j=\{x^j,.\}$  then
  \begin{align*}
    \phi(g(\gamma_i,\gamma_j)) &=\phi(\{x^i,x^k\}g_{kl}\{x^j,x^l\}) =\phi(\mathcal{P}^{ik}g_{kl}\mathcal{P}^{jl}).
  \end{align*}
  From $(2)$ in Definition \ref{definition} it follows that
  \begin{center}
    $\psi(\gamma_i)(b)=\phi(\{x^i,\phi^{-1}(b)\})=\{\phi(x^i),b\}^{\prime}$,
  \end{center}
  which implies that, $\psi(\gamma_i)=\{\phi(x^i),\cdot\}^{\prime}$ and in the same way we get $\psi(\gamma_j)=\{\phi(x^j),\cdot\}^{\prime}$.
  
  Furthermore,
  \begin{align*}
    \phi(\mathcal{P}^{ik}g_{kl}\mathcal{P}^{jl})&=\{\phi(x^i),y^\alpha\}^{\prime}{A^k}_\alpha \phi (g_{kl})\{\phi(x^j),y^\beta\}^{\prime} {A^l}_\beta,
  \end{align*}
  and 
  \begin{align*}
    g^{\prime}(\psi(\gamma_i),\psi(\gamma_j))&=\{\phi(x^i),y^\alpha\}^{\prime}g^{\prime}_{\alpha \beta}\{\phi(x^j),y^\beta\}^{\prime}.
  \end{align*}
  From $(3)$ in Definition \ref{definition} one obtains
  \begin{equation}
    \label{equation:*}
    \{\phi(x^i),y^\alpha\}^{\prime}({A^k}_\alpha \phi (g_{kl}){A^l}_\beta-g^{\prime}_{\alpha \beta})\{\phi(x^j),y^\beta\}^{\prime}=0.
  \end{equation}
  Note that if  $\{\phi(x^i),y^\beta\}C_\beta=0$, for $C_\beta \in \mathcal{A}^{\prime}$, then
  \begin{align*}
    \{y^\alpha,y^\beta\}^{\prime}C_\beta&= \phi(\{\phi^{-1}(y^\alpha),\phi^{-1}(y^\alpha)\})C_\beta=\phi\Big( \frac{\partial \phi^{-1}(y^\alpha)}{\partial x^i}\{x^i,\phi^{-1}(y^\beta)\}\Big)C_\beta\\&=\phi\Big(\frac{\partial \phi^{-1}(y^\alpha)}{\partial x^i}\Big)\phi\Big(\{x^i,\phi^{-1}(y^\beta)\}\Big)C_\beta=\phi\Big(\frac{\partial \phi^{-1}(y^\alpha)}{\partial x^i}\Big)\{\phi(x^i),y^\beta\}^{\prime}C_\beta=0.
  \end{align*}
  Therefore, equation \eqref{equation:*} yields
  \begin{center}
    $\{y^\gamma,y^\alpha\}^\prime({A^k}_\alpha \phi (g_{kl}){A^l}_\beta-g^{\prime}_{\alpha \beta})\{\phi(x^j),y^\beta\}^\prime=0$, 	
  \end{center}
  and furthermore 
  \begin{center}	
    $\{y^\gamma,y^\alpha\}^{\prime}({A^k}_\alpha \phi (g_{kl}){A^l}_\beta-g^{\prime}_{\alpha \beta})\{y^\delta,y^\beta\}^\prime=0$.
  \end{center}
  In matrix notation this becomes 
  \begin{equation}\label{eq4.2.2}
    \mathcal{P}^{\prime}A\phi(g)A^T\mathcal{P}^{\prime}=\mathcal{P}^{\prime}g^{\prime}\mathcal{P}^{\prime}.
  \end{equation}
 
  To prove the converse, we assume that
  $\phi:\mathcal{A}\rightarrow \mathcal{A}^{\prime}$
  ($\phi(a)=a^{\prime}$) is an isomorphism such that
  $\phi(\mathcal{A}_\text{fin})=\mathcal{A}^\prime_{\text{fin}}$ and
  \eqref{eq4.2.2} holds. First, we need to define the map
  $\psi$. Since $\phi$ is an isomorphism of Poisson algebras, we may
  define $\psi$ as:
  \begin{center}
    $\psi(\gamma_i)(a^{\prime}):=\phi(\alpha(\phi^{-1}(a^{\prime})))$,
  \end{center}
  which clearly fulfills:                                                                          
  \begin{center}
    $\phi(\gamma_i(a))=\psi(\gamma_i)(\phi(a))$.
  \end{center}
  With a slight abuse of notation, let us show that
  $\psi(\alpha) \in \mathfrak{g}^{\prime}$ for
  $\alpha \in \mathfrak{g} $.  Writing $\alpha=a_i\{b^i,.\}$ and
  $\beta=b_i\{b^i,.\}$ as inner derivations in $\mathfrak{g}$ one
  obtains
  \begin{align*}
    \psi(\alpha)(a^{\prime})=\phi(\alpha(\phi^{-1}(a^{\prime}))) =\phi(a_i\{b^i,\phi^{-1}(a^{\prime})\}) =\phi(a_i)\{\phi(b^i),a^{\prime}\}^{\prime},
  \end{align*}
  which implies that
  $\psi(\alpha)=\phi(a_i)\{\phi(b^i),.\}^{\prime}\in\mathfrak{g}^{\prime}$.
  Similarly, one obtains
  $\psi(\beta)=\phi(b_i)\{\phi(b^i),.\}^{\prime} \in
  \mathfrak{g}^{\prime}$.  Secondly, we show that $\psi$ is a Lie
  algebra isomorphism:
  \begin{align*}
    \text{I)}    \qquad [\psi(\alpha),\psi(\beta)](a^{\prime})&=\psi(\alpha)\big(\psi(\beta)(a^{\prime})\big)-\psi(\beta)\big(\psi(\alpha)(a^{\prime})\big) \\&=\psi(\alpha)\big(\phi(\beta(\phi^{-1}(a^{\prime})))\big)-\psi(\beta)\big(\phi(\alpha(\phi^{-1}(a^{\prime})))\big) 
    \\&=\phi\big(\alpha(\beta(\phi^{-1}(a^{\prime})))\big)-\phi\big(\beta(\alpha(\phi^{-1}(a^{\prime})))\big) \\&=\phi\big(\alpha(\beta(\phi^{-1}(a^{\prime})))-\beta(\alpha(\phi^{-1}(a^{\prime})))\big)
    \\&=\phi\big([\alpha,\beta](\phi^{-1}(a^{\prime})\big) \\&=\psi([\alpha,\beta])(a^{\prime}).
  \end{align*}

  \begin{align*} 
    \text{II)} \quad \psi(\alpha)(a^{\prime})+\psi(\beta)(a^{\prime})&=\phi\big(\alpha(\phi^{-1}(a^{\prime}))\big) +\phi\big(\beta(\phi^{-1}(a^{\prime}))\big)= \phi\big(\alpha(\phi^{-1}(a^{\prime}) +\beta(\phi^{-1}(a^{\prime}))\big)\\&= \phi\big((\alpha+\beta)(\phi^{-1}(a^{\prime}))\big)
    =\psi(\alpha+\beta)(a^{\prime}).
  \end{align*}
  Therefore $\psi$ is a Lie algebra homomorphism. Now, we show that
  the map $\psi^{-1}$ defined by
  \begin{center}
    $\psi^{-1}(\alpha^{\prime})(a):=\phi^{-1}(\alpha^{\prime}(\phi(a)))$,
  \end{center}
  for all $a \in \mathcal{A}$ and
  $\alpha^{\prime} \in \mathfrak{g}^{\prime}$ is indeed the inverse of $\psi$:
  \begin{align*}
    \psi^{-1}\big(\psi(\alpha)\big)(a)=\phi^{-1}\Big(\psi(\alpha)\big(\phi(a)\big)\Big) = \phi^{-1}\Big(\phi\big(\alpha(\phi^{-1}\phi(a))\big)\Big) = \alpha(a)
  \end{align*}
  and
  \begin{align*}
    \psi\big(\psi^{-1}(\alpha^{\prime})\big)(a^{\prime})=\phi\Big(\psi^{-1}(\alpha^{\prime})\big(\phi^{-1}(a^{\prime})\big)\Big) = \phi\Big(\phi^{-1}\big(\alpha^{\prime}(\phi\phi^{-1}(a^{\prime}))\big)\Big) = \alpha^{\prime}(a^{\prime}).
  \end{align*}
  Finally, we show that $(\phi,\psi)$ is a morphism of
  K\"ahler-Poisson algebras.

  $(1)$  $\phi(a)\psi(\alpha)(a^{\prime})=\phi(a)\phi(\alpha(\phi^{-1}(a^{\prime}))) =\phi(a\alpha(\phi^{-1}(a^{\prime})))=\psi(a\alpha)(a^{\prime}).$ 
  
  $(2)$ $\psi(\alpha)(\phi(a))=\phi(\alpha(\phi^{-1}(\phi(a)))) =\phi(\alpha(a)).$
  
  $(3)$ For $\alpha=\alpha_i\{x^i,.\}$ and $\beta=\beta_i\{x^i,.\}$ one gets
  \begin{align*}
    \phi(g(\alpha,\beta))&=\phi(\alpha_i\{x^i,x^k\}g_{kl}\beta_j\{x^j,x^l\}) =\phi(\alpha_i)\phi(\{x^i,x^k\})\phi(g_{kl})\phi(\beta_j)\phi(\{x^j,x^l\})
    \\&=\phi(\alpha_i)\phi(\mathcal{P}^{ik})\phi(g_{kl})\phi(\beta_j)\phi(\mathcal{P}^{jl}) =\phi(\alpha_i)(A\mathcal{P}^{\prime}A^T)^{ik}\phi(g_{kl})\phi(\beta_j)(A\mathcal{P}^{\prime}A^T)^{jl}
    \\&=\phi(\alpha_i)\phi(\beta_j)\big(A\mathcal{P}^{\prime}A^T\phi(g)A(-\mathcal{P}^{\prime})A^T\big)^{ij} =-\phi(\alpha_i)\phi(\beta_j)\big(A\mathcal{P}^{\prime}A^T\phi(g)A\mathcal{P}^{\prime}A^T\big)^{ij}.
  \end{align*}
  By using
  \begin{center}
    $\mathcal{P}^{\prime}g^{\prime}\mathcal{P}^{\prime}=\mathcal{P}^{\prime}A^T\phi(g)A\mathcal{P}^{\prime}$,
  \end{center}
  one obtains
  \begin{align*}
    \phi(g(\alpha,\beta))&=-\phi(\alpha_i)\phi(\beta_j)(A\mathcal{P}^{\prime}g^{\prime}\mathcal{P}^{\prime}A^T)^{ij}
                           =-\phi(\alpha_i)\phi(\beta_j){A^i}_\alpha \{y^\alpha,y^\beta\}g^{\prime}_{\beta\gamma} \{y^\gamma,y^\delta\}{A^j}_\delta \\&=-\phi(\alpha_i)\phi(\beta_j) \{\phi(x^i),y^\beta\}g^{\prime}_{\beta\gamma} \{y^\gamma,\phi(x^j)\} =\psi(\alpha)(y^\beta)g^{\prime}_{\beta\gamma}\psi(\beta)(y^\gamma) 
    \\&=g^{\prime}(\psi(\alpha),\psi(\beta)).
  \end{align*}
  Note that $(4)$ is true by assumption.
\end{proof}

As an illustration, Proposition~\ref{proposition4} gives us another
method to do the calculations of Example \ref{example4.0.5}. We have
seen in Example \ref{example4.0.5} that two K\"ahler-Poisson algebras
$\mathcal{K}=(\mathcal{A},g,\{x^1,...,x^m\})$ and
$\mathcal{K}^{\prime}=(\mathcal{A},g^{\prime},\{y^1,...,{y^m}^{\prime}\})$,
where $\mathcal{A}$ is a finitely generated algebra, can be isomorphic
when considering different sets of generators for the K\"ahler-Poisson
algebra structures of $\mathcal{A}$ and $\mathcal{A}^{\prime}$.

\begin{example}(Continuation of ~\ref{example4.0.5})
  Proposition~\ref{proposition4} tells us that if we set
  $h=A^T\phi(g)A$, then
  $\mathcal{P^\prime}h\mathcal{P^\prime}=\mathcal{P^\prime}A^T\phi(g)A\mathcal{P^\prime}$
  implying that $\mathcal{K}\cong \mathcal{K}^{\prime}$.  Let
  $y^1=x+y$ and $y^2=x-y$. Hence $x=\frac{1}{2}(y^1+y^2)$ and
  $y=\frac{1}{2}(y^1-y^2)$. We compute the matrix
  ${A^i}_\alpha=\frac{\partial x^i}{\partial y^\alpha}$, with
  $\phi=id$: \begin{center}
    ${A^1}_1=\frac{\partial x^1}{\partial y^1}=\frac{\partial
      (\frac{1}{2}(y^1+y^2))}{\partial y^1}=\frac{1}{2}$,
    ${A^1}_2=\frac{\partial x^1}{\partial y^2}=\frac{\partial
      (\frac{1}{2}(y^1+y^2))}{\partial y^2}=\frac{1}{2}$,
  \end{center}
  \begin{center}
    ${A^2}_1=\frac{\partial x^2}{\partial y^1}=\frac{\partial y}{\partial y^1}=\frac{\partial (\frac{1}{2}(y^1-y^2))}{\partial y^1}=\frac{1}{2}$,
    ${A^2}_1=\frac{\partial x^2}{\partial y^2}=\frac{\partial y}{\partial y^2}=\frac{\partial (\frac{1}{2}(y^1-y^2))}{\partial y^2}=-\frac{1}{2}$.
  \end{center}
  Therefore the matrix $A$ becomes
  \begin{center}
    $A=\begin{pmatrix}
      \frac{1}{2}& \frac{1}{2}\\
      \frac{1}{2}& -\frac{1}{2}\\
    \end{pmatrix}$
  \end{center}
  and
  \begin{align*}
    h&=\begin{pmatrix}
      \frac{1}{2}& \frac{1}{2}\\
      \frac{1}{2}& -\frac{1}{2}\\
    \end{pmatrix}
    \begin{pmatrix}
      g_{11}& g_{22}\\
      g_{21}& g_{22}\\
    \end{pmatrix}
    \begin{pmatrix}
      \frac{1}{2}& \frac{1}{2}\\
      \frac{1}{2}& -\frac{1}{2}\\
    \end{pmatrix}
    =\begin{pmatrix}
      \frac{1}{2}(g_{11}+g_{21})& \frac{1}{2}(g_{12}+g_{22})\\
      \frac{1}{2}(g_{11}-g_{21})& \frac{1}{2}(g_{12}-g_{22})\\
    \end{pmatrix}\begin{pmatrix}
      \frac{1}{2}& \frac{1}{2}\\
      \frac{1}{2}& -\frac{1}{2}\\
    \end{pmatrix}
    \\&=\begin{pmatrix}
      \frac{1}{4}(g_{11}+g_{21})+\frac{1}{4}(g_{12}+g_{22})& \frac{1}{4}(g_{11}+g_{21})-\frac{1}{4}(g_{12}+g_{22})\\
      \frac{1}{4}(g_{11}-g_{21})+\frac{1}{4}(g_{12}-g_{22})& \frac{1}{4}(g_{11}-g_{21})-\frac{1}{4}(g_{12}-g_{22})\\
    \end{pmatrix}
    \\&=\begin{pmatrix}
      \frac{1}{4}(g_{11}+2g_{12}+g_{22})& \frac{1}{4}(g_{11}-g_{22})\\
      \frac{1}{4}(g_{11}-g_{22})& \frac{1}{4}(g_{11}-2g_{12}+g_{22})\\
    \end{pmatrix},
  \end{align*}
  which agrees with the result in Example
  ~\ref{example4.0.5}. Also
  $\phi(\mathcal{A}_\text{fin})=\phi(\mathcal{A^\prime_{\text{fin}}})$,
  since $\phi=id_\mathcal{A}$. Therefore we can now use
  Proposition ~\ref{proposition4} to conclude that
  $\mathcal{K}\cong \mathcal{K}^{\prime}$.
\end{example}

In the above examples of isomorphism
$\mathcal{K}\cong \mathcal{K}^{\prime}$, we noted that
$\eta=\eta^\prime$. The next proposition shows that this is indeed
true in the generic case.

\begin{proposition}
  \label{proposition4.2.3}
  Let $\mathcal{K}=(\mathcal{A},g,\{x^1,...,x^m\})$ and
  $\mathcal{K}^{\prime}=(\mathcal{A}^{\prime},g^{\prime},\{y^1,...,{y^m}^{\prime}\})$
  be K\"ahler-Poisson algebras and let
  $(\phi,\psi):\mathcal{K}\rightarrow \mathcal{K}^{\prime}$ be an
  isomorphism of K\"ahler-Poisson algebras.  If
  \begin{align*}
    \eta\mathcal{P}g\mathcal{P}g\mathcal{P}=-\mathcal{P} \quad\text{and}\quad\eta^{\prime}\mathcal{P}^{\prime}g^{\prime}\mathcal{P}^{\prime}g^{\prime}\mathcal{P}^{\prime}=-\mathcal{P}^{\prime}
  \end{align*}
  then $(\phi(\eta)-\eta^{\prime})\mathcal{P}^\prime=0$.
\end{proposition}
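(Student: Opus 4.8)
The plan is to show that $\phi(\eta)$ is itself a valid parameter for the K\"ahler--Poisson condition of $\mathcal{K}'$, namely that $\phi(\eta)\mathcal{P}'g'\mathcal{P}'g'\mathcal{P}'=-\mathcal{P}'$, and then to exploit the near-uniqueness of such a parameter. Indeed, once this is known, subtracting the assumed identity $\eta'\mathcal{P}'g'\mathcal{P}'g'\mathcal{P}'=-\mathcal{P}'$ gives $(\phi(\eta)-\eta')\mathcal{P}'g'\mathcal{P}'g'\mathcal{P}'=0$; since $\phi(\eta)-\eta'$ is an element of the commutative algebra $\mathcal{A}'$ it may be pulled through the matrix products, so multiplying by $\eta'$ and using $\eta'\mathcal{P}'g'\mathcal{P}'g'\mathcal{P}'=-\mathcal{P}'$ yields $(\phi(\eta)-\eta')(-\mathcal{P}')=0$, i.e. the claim. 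Thus the whole content lies in showing that $\phi(\eta)$ works for $\mathcal{K}'$.

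To prove that, I would apply the Poisson homomorphism $\phi$ entrywise to the matrix form $\eta\mathcal{P}g\mathcal{P}g\mathcal{P}=-\mathcal{P}$ of the condition for $\mathcal{K}$, giving
\begin{equation*}
\phi(\eta)\,\phi(\mathcal{P})\,\phi(g)\,\phi(\mathcal{P})\,\phi(g)\,\phi(\mathcal{P})=-\phi(\mathcal{P}).
\end{equation*}
Substituting the relation $\phi(\mathcal{P})=A\mathcal{P}'A^T$ recorded before Proposition~\ref{proposition4} and writing $B=A^T\phi(g)A$, the left-hand side becomes $\phi(\eta)\,A\,\mathcal{P}'B\mathcal{P}'B\mathcal{P}'\,A^T$ and the right-hand side $-A\mathcal{P}'A^T$. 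The identity of Proposition~\ref{proposition4}, in the form $\mathcal{P}'B\mathcal{P}'=\mathcal{P}'g'\mathcal{P}'$, applied twice shows $\mathcal{P}'B\mathcal{P}'B\mathcal{P}'=\mathcal{P}'g'\mathcal{P}'g'\mathcal{P}'$, so everything reduces to removing the outer factors $A$ and $A^T$.

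This stripping is the main obstacle, because $A$ need be neither square nor invertible. The way around it is the isomorphism hypothesis $\phi(\mathcal{A}_\text{fin})=\mathcal{A}'_\text{fin}$: each generator is $y^\alpha=\phi(p^\alpha)$ for the polynomial $p^\alpha=\phi^{-1}(y^\alpha)$ in $x^1,\dots,x^m$, and with ${C^\alpha}_i=\partial p^\alpha/\partial x^i$ I would establish the auxiliary identity $\phi(C)A\mathcal{P}'=\mathcal{P}'$ together with its transpose $\mathcal{P}'A^T\phi(C)^T=\mathcal{P}'$. These follow using only well-defined bracket combinations: ${A^i}_\beta(\mathcal{P}')^{\beta\gamma}=\{\phi(x^i),y^\gamma\}'$, and then $\phi({C^\alpha}_i)\{\phi(x^i),y^\gamma\}'=\phi\big({C^\alpha}_i\{x^i,p^\gamma\}\big)=\phi(\{p^\alpha,p^\gamma\})=\{y^\alpha,y^\gamma\}'=(\mathcal{P}')^{\alpha\gamma}$, so that no isolated (ill-defined) formal derivative ever occurs. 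Granting these identities, multiplying the reduced equation on the left by $\phi(C)$ and on the right by $\phi(C)^T$ collapses the outer factors and leaves exactly $\phi(\eta)\mathcal{P}'g'\mathcal{P}'g'\mathcal{P}'=-\mathcal{P}'$.

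Combining this with the assumed condition for $\eta'$ as in the first paragraph then gives $(\phi(\eta)-\eta')\mathcal{P}'=0$. Throughout, the point requiring the most care is the non-uniqueness of formal derivatives flagged at \eqref{equation}: every step must keep the matrices $A$ and $C$ contracted against Poisson brackets, where the combinations are genuine algebra elements independent of the chosen polynomial representatives.
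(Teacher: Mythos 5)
Your proof is correct and follows essentially the same route as the paper's: both apply $\phi$ to $\eta\mathcal{P}g\mathcal{P}g\mathcal{P}=-\mathcal{P}$, substitute $\phi(\mathcal{P})=A\mathcal{P}^{\prime}A^{T}$, invoke the identity of Proposition~\ref{proposition4} twice, and remove the outer factors $A$, $A^{T}$ by the same chain-rule argument with the matrix of derivatives of $\phi^{-1}(y^{\alpha})$ that appears in the proof of Proposition~\ref{proposition4}. The only difference is the order of operations: you strip the $A$'s first, isolating the intermediate fact that $\phi(\eta)$ itself satisfies the K\"ahler--Poisson condition for $\mathcal{K}^{\prime}$, whereas the paper multiplies by $\eta^{\prime}$ first and strips at the very end.
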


\begin{proof}
  From Proposition~\ref{proposition4} we have
  $\mathcal{P}^{\prime}g^{\prime}\mathcal{P}^{\prime}=\mathcal{P}^{\prime}A^T\phi(g)A\mathcal{P}^{\prime}.$
  Starting from $\eta\mathcal{P}g\mathcal{P}g\mathcal{P}=-\mathcal{P}$
  and using that $\phi(\mathcal{P})=A\mathcal{P}^{\prime}A^T$ and
  $\mathcal{P}^{\prime}g^{\prime}\mathcal{P}^{\prime}=\mathcal{P}^{\prime}A^T\phi(g)A\mathcal{P}^{\prime}$,
  one has
  $-\phi(\mathcal{P})=\phi(\eta)\phi(\mathcal{P}g\mathcal{P}g\mathcal{P}).$
  Multiplying both sides by $\eta^{\prime}$, where
  $\eta^{\prime}\mathcal{P}^{\prime}g^{\prime}\mathcal{P}^{\prime}g^{\prime}\mathcal{P}^{\prime}=-\mathcal{P}^{\prime}$,
  we obtain
  \begin{align*}
    -\eta^{\prime}\phi(\mathcal{P})&=\phi(\eta)\phi(\mathcal{P}g\mathcal{P}g\mathcal{P})\eta^{\prime}
                                     =\phi(\eta)A\mathcal{P}^{\prime}A^T\phi(g)A\mathcal{P}^{\prime}A^T\phi(g)A\mathcal{P}^{\prime}A^T\eta^{\prime}
    \\&=\phi(\eta)A\mathcal{P}^{\prime}g^{\prime}\mathcal{P}^{\prime}A^T\phi(g)A\mathcal{P}^{\prime}A^T\eta^{\prime}
    =\phi(\eta)A\eta^{\prime}\mathcal{P}^{\prime}g^{\prime}\mathcal{P}^{\prime}g^{\prime}\mathcal{P}^{\prime}A^T
    =-\phi(\eta)A\mathcal{P}^{\prime}A^T.
  \end{align*}
  Hence, one obtains
  $\eta^{\prime}A\mathcal{P}^{\prime}A^T=\phi(\eta)A\mathcal{P}^{\prime}A^T,$
  which implies
  that$(\eta^{\prime}-\phi(\eta))\mathcal{P}^{\prime}=0,$ using the
  same argument as in the proof of Proposition ~\ref{proposition4}
\end{proof}

Thus, if at least one of
$(\mathcal{P^{\prime}})^{\alpha\beta}=\{y^\alpha,y^\beta\}$ is not a
zero divisor, then Proposition ~\ref{proposition4.2.3} implies that
$\phi(\eta)=\eta^\prime$.

\section{Subalgebras of K\"ahler-Poisson algebras}
As shown in Section 3, a morphism of K\"ahler-Poisson algebras is a
pair of maps $(\phi,\psi)$, with
$\phi:\mathcal{A}\rightarrow \mathcal{A}^{\prime}$ a Poisson algebra
homomorphism and $\psi:\mathfrak{g}\rightarrow \mathfrak{g}^{\prime}$
a Lie algebra homomorphism, satisfying certain conditions.  In this
section, we present some algebraic properties of K\"ahler-Poisson
algebras. In particular, we introduce subalgebras of K\"ahler-Poisson
algebras, which we define by using the concept of morphisms.

\begin{definition}
  \label{definition2}
  Let $\mathcal{K}=(\mathcal{A},g,\{x^1,...,x^m\})$ and
  $\mathcal{K}^{\prime}=(\mathcal{A}^{\prime},g^{\prime},\{y^1,...,{y^m}^{\prime}\})$
  be K\"ahler-Poisson algebras together with their corresponding
  modules of derivations $\mathfrak{g}$ and $\mathfrak{g}^{\prime}$,
  respectively.  $\mathcal{K}$ is a K\"ahler-Poisson subalgebra of
  $\mathcal{K}^{\prime}$ if:
  \begin{enumerate}[(A)]
  \item $\mathcal{A}$ is a Poisson subalgebra of $\mathcal{A}^{\prime}$,
  \item $(\operatorname{id}|_\mathcal{A},\operatorname{id}|_\mathfrak{g})$ is a homomorphism of K\"ahler-Poisson algebras,where $\operatorname{id}|_\mathcal{A}$ and $\operatorname{id}|_\mathfrak{g}$ denotes the identity maps restricted to $\mathcal{A}$ and $\mathfrak{g}$, respectively.
  \end{enumerate}
\end{definition}

Given two K\"ahler-Poisson algebras
$\mathcal{K}=(\mathcal{A},g,\{x^1,...,x^m\})$ and
$\mathcal{K}^{\prime}=(\mathcal{A}^{\prime},g^{\prime},\{y^1,...,{y^m}^{\prime}\})$,
we illustrate with examples when $\mathcal{K}$ is a K\"ahler-Poisson
subalgebra of $\mathcal{K}^{\prime}$, where $\mathcal{A}$ is a proper
Poisson subalgebra of a finitely generated algebra
$\mathcal{A}^{\prime}$. We consider different set of generators for
the K\"ahler-Poisson algebra structures of $\mathcal{A}$ and
$\mathcal{A}^{\prime}$. Note that the property (B) in Definition
\ref{definition2} determines the metric $g$ on $\mathcal{K}$.

\begin{example}
  Let $\mathcal{K}=(\mathcal{A},g,\{x,y\})$ and
  $\mathcal{K}^{\prime}=(\mathcal{A}^{\prime},g^{\prime},\{x,y,z\})$
  be K\"ahler-Poisson algebras, where $\mathcal{A}$ is a subalgebra of
  $\mathcal{A}^{\prime}$ generated by $\{x,y\}$ and
  $\mathcal{A}^{\prime}$ generated by $\{x,y,z\}$. Let
  $\{x,y\}=p(x,y)$ and $\{x,z\}=\{y,z\}=0$. Since $\mathcal{A}$ is a
  Poisson subalgebra of $\mathcal{A}^{\prime}$, to show that
  $\mathcal{K}$ is a K\"ahler-Poisson subalgebra of
  $\mathcal{K}^{\prime}$, we only need to show that
  $(\operatorname{id}|_\mathcal{A},\operatorname{id}|_\mathfrak{g})$
  satisfies the properties in Definition ~\ref{definition}. This fact
  determines the metric $g$ on $\mathcal{K}$.
  
  $(1)$ $\psi(a\alpha)=a\alpha=\phi(a)\psi(\alpha)$.
  
  $(2)$ $\phi(\alpha(a))=\alpha(a)=\psi(\alpha)(\phi(a))$.
  
  $(3)$ We see that $\phi(g(\alpha,\beta))=g^{\prime}(\psi(\alpha),\psi(\beta))$.
  \begin{align*} 
    \phi(g(\alpha,\beta))&=\sum_{i,j=1}^{2}\alpha(x^i)g_{ij}\beta(x^j)\\ 
                         &= \alpha(x^1)g_{11}\beta(x^1)+\alpha(x^1)g_{12}\beta(x^2)+\alpha(x^2)g_{21}\beta(x^1)+\alpha(x^2)g_{22}\beta(x^2) \\&= \alpha(x)g_{11}\beta(x)+\alpha(x)g_{12}\beta(y)+\alpha(y)g_{21}\beta(x)+\alpha(y)g_{22}\beta(y),
  \end{align*}
  and
  \begin{align*} 
    g^{\prime}(\psi(\alpha),\psi(\beta))&=g^{\prime}(\alpha,\beta)=\sum_{i,j=1}^{3}\alpha(x^i)g_{ij}^{\prime}\beta(x^j)
    \\&=\alpha(x^1)g_{11}^{\prime}\beta(x^1)+\alpha(x^1)g_{12}^{\prime}\beta(x^2)+\alpha(x^1)g_{13}^{\prime}\beta(x^3)+\alpha(x^2)g_{21}^{\prime}\beta(x^1)\\&\quad+\alpha(x^2)g_{22}^{\prime}\beta(x^2)+\alpha(x^2)g_{23}^{\prime}\beta(x^3)+\alpha(x^3)g_{31}^{\prime}\beta(x^1)+\alpha(x^3)g_{32}^{\prime}\beta(x^2)\\&\quad+\alpha(x^3)g_{33}^{\prime}\beta(x^3)\\&= \alpha(x)g_{11}^{\prime}\beta(x)+\alpha(x)g_{12}^{\prime}\beta(y)+\alpha(x)g_{13}^{\prime}\beta(z)+\alpha(y)g_{21}^{\prime}\beta(x)+\alpha(y)g_{22}^{\prime}\beta(y)\\& \quad+\alpha(y)g_{23}^{\prime}\beta(z) 
    \alpha(z)g_{31}^{\prime}\beta(x)+\alpha(z)g_{32}^{\prime}\beta(y)+\alpha(z)g_{33}^{\prime}\beta(z)\\&= \alpha(x)g_{11}^{\prime}\beta(x)+\alpha(x)g_{12}^{\prime}\beta(y)+\alpha(y)g_{21}^{\prime}\beta(x)+\alpha(y)g_{22}^{\prime}\beta(y),  
  \end{align*} 
  since, $\alpha=a_i\{x^i,.\}=a_1\{x,.\}+a_2\{y,.\}\in\mathfrak{g}$,
  we get $\alpha(z)=a_1\{x,z\}+a_2\{y,z\}=0$.
  
  Similarly, $\beta(z)=0$. By comparing both sides
  $\phi(g(\alpha,\beta))$ and $g^{\prime}(\psi(\alpha),\psi(\beta))$,
  we conclude that $g_{11}=g_{11}^{\prime}$ ,$g_{12}=g_{12}^{\prime}$
  ,$g_{21}=g_{21}^{\prime}$ and $g_{22}=g_{22}^{\prime}$. Therefore,
  $\phi(g(\alpha,\beta))=g^{\prime}(\psi(\alpha),\psi(\beta))$.
  
  $(4)$ By construction we have that $\phi(\mathcal{A}_\text{fin})\subseteq \mathcal{A}^{\prime}_{\text{fin}} $.\\
  Therefore, $\mathcal{K}$ is a K\"ahler-Poisson subalgebra of
  $\mathcal{K}^{\prime}$ and $g$ obtained from $g^{\prime}$ as above.
  	
\end{example}

Let us take another example with different number of generators.

\begin{example}
  Let $\mathcal{K}=(\mathcal{A},g,\{x,y\})$ and
  $\mathcal{K}^{\prime}=(\mathcal{A}^{\prime},g^{\prime},\{x,y,z,w\})$
  be K\"ahler-Poisson algebras, where $\mathcal{A}$ is a subalgebra of
  $\mathcal{A}^{\prime}$ generated by $\{x,y\}$ and
  $\mathcal{A}^{\prime}$ generated by $\{x,y,z,w\}$. Let
  $\{x,y\}=p(x,y)$, $\{z,w\}=q(z,w)$ and
  $\{x,z\}=\{x,w\}=\{y,z\}=\{y,w\}=0$. To show that $\mathcal{K}$ is a
  K\"ahler-Poisson subalgebra of $\mathcal{K}^{\prime}$ we only need
  to show that
  $(\operatorname{id}|_\mathcal{A},\operatorname{id}|_\mathfrak{g})$
  satisfies the properties in Definition \ref{definition}. This fact
  determines the metric $g$ on $\mathcal{K}$.
  
  $(1)$ $\psi(a\alpha)=a\alpha=\phi(a)\psi(\alpha)$.
  
  $(2)$ $\phi(\alpha(a))=\alpha(a)=\psi(\alpha)(\phi(a))$.
  
  $(3)$ We see that $\phi(g(\alpha,\beta))=g^{\prime}(\psi(\alpha),\psi(\beta))$, the left hand side is
  \begin{align*} 
    \phi(g(\alpha,\beta))&=\sum_{i,j=1}^{2}\alpha(x^i)g_{ij}\beta(x^j)\\ 
                         &= \alpha(x^1)g_{11}\beta(x^1)+\alpha(x^1)g_{12}\beta(x^2)+\alpha(x^2)g_{21}\beta(x^1)+\alpha(x^2)g_{22}\beta(x^2) \\&= \alpha(x)g_{11}\beta(x)+\alpha(x)g_{12}\beta(y)+\alpha(y)g_{21}\beta(x)+\alpha(y)g_{22}\beta(y),
  \end{align*}
  and the right hand side is
  \begin{align*} 
    g^{\prime}(\psi(\alpha),\psi(\beta))&=g^{\prime}(\alpha,\beta)=\sum_{i,j=1}^{4}\alpha(x^i)g_{ij}^{\prime}\beta(x^j)
	\\&=\alpha(x^1)g_{11}^{\prime}\beta(x^1)+\alpha(x^1)g_{12}^{\prime}\beta(x^2)+\alpha(x^1)g_{13}^{\prime}\beta(x^3)+\alpha(x^1)g_{14}^{\prime}\beta(x^4) \\& \quad +
    \alpha(x^2)g_{21}^{\prime}\beta(x^1)+\alpha(x^2)g_{22}^{\prime}\beta(x^2)+\alpha(x^2)g_{23}^{\prime}\beta(x^3)+\alpha(x^2)g_{24}^{\prime}\beta(x^4) \\& \quad+
    \alpha(x^3)g_{31}^{\prime}\beta(x^1)+\alpha(x^3)g_{32}^{\prime}\beta(x^2)+\alpha(x^3)g_{33}^{\prime}\beta(x^3)+\alpha(x^3)g_{34}^{\prime}\beta(x^4)\\& \quad+
    \alpha(x^4)g_{41}^{\prime}\beta(x^1)+\alpha(x^4)g_{42}^{\prime}\beta(x^2)+\alpha(x^4)g_{43}^{\prime}\beta(x^3)+\alpha(x^4)g_{44}^{\prime}\beta(x^4)\\&= \alpha(x)g_{11}^{\prime}\beta(x)+\alpha(x)g_{12}^{\prime}\beta(y)+\alpha(x)g_{13}^{\prime}\beta(z)+\alpha(x)g_{14}^{\prime}\beta(w) \\& \quad +
    \alpha(y)g_{21}^{\prime}\beta(x)+\alpha(y)g_{22}^{\prime}\beta(y)+\alpha(y)g_{23}^{\prime}\beta(z)+\alpha(y)g_{24}^{\prime}\beta(w) \\& \quad+
    \alpha(z)g_{31}^{\prime}\beta(x)+\alpha(z)g_{32}^{\prime}\beta(y)+\alpha(z)g_{33}^{\prime}\beta(z)+\alpha(z)g_{34}^{\prime}\beta(w)\\& \quad+
    \alpha(w)g_{41}^{\prime}\beta(x)+\alpha(w)g_{42}^{\prime}\beta(y)+\alpha(w)g_{43}^{\prime}\beta(z)+\alpha(w)g_{44}^{\prime}\beta(w) \\&= \alpha(x)g_{11}^{\prime}\beta(x)+\alpha(x)g_{12}^{\prime}\beta(y)+\alpha(y)g_{21}^{\prime}\beta(x)+\alpha(y)g_{22}^{\prime}\beta(y),  
  \end{align*}
  since,  $\alpha=a_i\{x^i,.\}=a_1\{x,.\}+a_2\{y,.\}\in\mathfrak{g}$, we get $\alpha(z)=a_1\{x,z\}+a_2\{y,z\}=0$. 
  
  Similarly, $\alpha(w)=\beta(z)=\beta(w)=0$. By comparing both sides
  $\phi(g(\alpha,\beta))$ and $g^{\prime}(\psi(\alpha),\psi(\beta))$,
  we conclude that $g_{11}=g_{11}^{\prime}$ ,$g_{12}=g_{12}^{\prime}$
  ,$g_{21}=g_{21}^{\prime}$ and $g_{22}=g_{22}^{\prime}$. Therefore,
  $\phi(g(\alpha,\beta))=g^{\prime}(\psi(\alpha),\psi(\beta))$.
	
  $(4)$ By construction we have that $\phi(\mathcal{A}_\text{fin})\subseteq \mathcal{A}^{\prime}_{\text{fin}} $.\\
  Therefore, $\mathcal{K}$ is a K\"ahler-Poisson subalgebra of
  $\mathcal{K}^{\prime}$ and $g$ obtained from $g^{\prime}$ as above.
		
\end{example}

Above we have given examples of when
$\mathcal{K}=(\mathcal{A},g,\{x^1,...,x^m\})$ is a K\"ahler-Poisson
subalgebra of
$\mathcal{K}^{\prime}=(\mathcal{A}^{\prime},g^{\prime},\{y^1,...,{y^m}^{\prime}\})$,
where $\mathcal{A}$ is a Poisson subalgebra of
$\mathcal{A}^{\prime}$. Next proposition shows that, in general a
morphism
$(\phi,\psi):(\mathcal{K},\mathfrak{g})\rightarrow
(\mathcal{K}^{\prime},\mathfrak{g}^{\prime})$ induces a
K\"ahler-Poisson subalgebra of $\mathcal{K}^{\prime}$, denoted by
$\operatorname{Im}(\phi,\psi)$.

\begin{proposition} 
  Let $\mathcal{K}=(\mathcal{A},g,\{x^1,...,x^m\})$ and
  $\mathcal{K}^{\prime}=(\mathcal{A}^{\prime},g^{\prime},\{y^1,...,{y^m}^{\prime}\})$
  be K\"ahler-Poisson algebras, and let
  $(\phi,\psi):\mathcal{K}\rightarrow \mathcal{K}^{\prime}$ be a
  homomorphism of K\"ahler-Poisson algebras. If
  $y^J\in\phi(\mathcal{A})$ for $J=1,...,m^{\prime}$ then
  $\operatorname{Im}(\phi,\psi)=(\phi(\mathcal{A}),\tilde{g},\{\phi(x^1),\phi(x^2),...,\phi(x^m)\})$
  is a K\"ahler-Poisson subalgebra of $\mathcal{K}^{\prime}$,
  where \begin{align}
    \tilde{g}_{kl}=\phi(\eta\mathcal{P}_{km})\{\phi(x^m),y^J\}^{\prime}g^{\prime}_{JM}\phi(\eta\mathcal{P}_{ln})\{\phi(x^n),y^M\}^{\prime}.
\end{align}
 
\end{proposition}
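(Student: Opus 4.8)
The plan is to verify the two clauses of Definition~\ref{definition2}. Clause (A), that $\phi(\mathcal{A})$ is a Poisson subalgebra of $\mathcal{A}^{\prime}$, is immediate because $\phi$ is a Poisson homomorphism. For clause (B) the inclusion $(\operatorname{id}|_{\phi(\mathcal{A})},\operatorname{id}|_{\mathfrak{g}})$ into $\mathcal{K}^{\prime}$ must be a morphism, so I must also know in advance that $\operatorname{Im}(\phi,\psi)$ is itself a K\"ahler--Poisson algebra. Conditions (1) and (2) of Definition~\ref{definition} hold trivially for identity maps, and condition (4) holds because each $\phi(x^i)\in\phi(\mathcal{A}_{\text{fin}})\subseteq\mathcal{A}^{\prime}_{\text{fin}}$, so the subalgebra they generate lies in $\mathcal{A}^{\prime}_{\text{fin}}$. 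Everything therefore reduces to (i) exhibiting $\tilde\eta\in\phi(\mathcal{A})$ realising the K\"ahler--Poisson condition for $\operatorname{Im}(\phi,\psi)$, and (ii) the metric compatibility (3), i.e.\ $\tilde g(\alpha,\beta)=g^{\prime}(\alpha,\beta)$ for all $\alpha,\beta$ in the module of $\operatorname{Im}(\phi,\psi)$.

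The computational heart is a closed form for $\tilde g_{kl}$ that uses the hypothesis $y^J\in\phi(\mathcal{A})$. Writing $y^J=\phi(a^J)$ with $a^J\in\mathcal{A}$, the fact that $\phi$ is a Poisson homomorphism gives $\{\phi(x^m),y^J\}^{\prime}=\phi(\{x^m,a^J\})=\phi(\mathcal{P}^m(a^J))$, so that $\phi(\eta\mathcal{P}_{km})\{\phi(x^m),y^J\}^{\prime}=\phi(\eta\mathcal{P}_{km}\mathcal{P}^m(a^J))$. Unwinding the definitions of $\mathcal{P}_{km}$ and $\mathcal{D}_k$ shows $\eta\mathcal{P}_{km}\mathcal{P}^m(a^J)=-\mathcal{D}_k(a^J)$, and since two such factors occur the signs cancel, yielding
\[
\tilde g_{kl}=\phi(\mathcal{D}_k(a^J))\,g^{\prime}_{JM}\,\phi(\mathcal{D}_l(a^M)).
\]
This is the form that makes the rest tractable.

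For (ii) it suffices, by $\phi(\mathcal{A})$-bilinearity, to test on generating derivations $\alpha=\{a^{\prime},\cdot\}^{\prime}$ and $\beta=\{b^{\prime},\cdot\}^{\prime}$ with $a^{\prime}=\phi(a)$, $b^{\prime}=\phi(b)$. Then $\alpha(\phi(x^i))=\phi(\{a,x^i\})$, and inserting the closed form gives
\[
\tilde g(\alpha,\beta)=\phi(\{a,x^i\}\mathcal{D}_i(a^J))\,g^{\prime}_{JM}\,\phi(\mathcal{D}_j(a^M)\{b,x^j\}).
\]
Now I invoke the K\"ahler--Poisson identity of $\mathcal{K}$ in the form $\mathcal{D}_i(c)\mathcal{P}^i(d)=\{c,d\}$: applied to the two end factors it gives $\{a,x^i\}\mathcal{D}_i(a^J)=\{a,a^J\}$ and $\mathcal{D}_j(a^M)\{b,x^j\}=\{b,a^M\}$, so that $\tilde g(\alpha,\beta)=\phi(\{a,a^J\})g^{\prime}_{JM}\phi(\{b,a^M\})=\{a^{\prime},y^J\}^{\prime}g^{\prime}_{JM}\{b^{\prime},y^M\}^{\prime}=g^{\prime}(\alpha,\beta)$. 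This is exactly condition (3); the hypothesis $y^J\in\phi(\mathcal{A})$ is essential here, since it is what lets the projection property of the $\mathcal{D}$-tensor be transported across $\phi$.

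Finally, for (i) I would take $\tilde\eta=\phi(\eta)$ and run the same two identities through the defining expression $\tilde\eta\{a^{\prime},\phi(x^i)\}^{\prime}\tilde g_{ij}\{\phi(x^j),\phi(x^k)\}^{\prime}\tilde g_{kl}\{\phi(x^l),b^{\prime}\}^{\prime}$. Collapsing each $\tilde g$ against its neighbouring bracket and using the projection identities $\mathcal{D}^i_{\;\;m}\mathcal{P}^m=\mathcal{P}^i$ telescopes this to the K\"ahler--Poisson expression of $\mathcal{K}^{\prime}$, namely $\{a^{\prime},y^J\}^{\prime}g^{\prime}_{JM}\{y^M,y^K\}^{\prime}g^{\prime}_{KL}\{y^L,b^{\prime}\}^{\prime}$, whose product with $\eta^{\prime}$ equals $-\{a^{\prime},b^{\prime}\}^{\prime}$ by the K\"ahler--Poisson condition of $\mathcal{K}^{\prime}$. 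The main obstacle is this last step: one must justify replacing $\eta^{\prime}$ by $\tilde\eta=\phi(\eta)\in\phi(\mathcal{A})$, i.e.\ that $\phi(\eta)$ and $\eta^{\prime}$ act identically on the telescoped expression. This is precisely the comparison carried out in Proposition~\ref{proposition4.2.3}, where one shows $(\phi(\eta)-\eta^{\prime})\mathcal{P}^{\prime}=0$; applied to the $\mathcal{P}^{\prime}$-factors appearing above, the same argument closes the gap. Combining (i) and (ii) establishes that $\operatorname{Im}(\phi,\psi)$ is a K\"ahler--Poisson subalgebra of $\mathcal{K}^{\prime}$.
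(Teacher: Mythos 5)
Your verification of the morphism conditions is, in substance, the paper's own argument: conditions (1), (2) and (4) of Definition~\ref{definition} are immediate for the identity pair, and your closed form $\tilde g_{kl}=\phi(\mathcal{D}_k(a^J))\,g^{\prime}_{JM}\,\phi(\mathcal{D}_l(a^M))$ together with the contraction $\{a,x^i\}\mathcal{D}_i(a^J)=\{a,a^J\}$ is exactly the collapse the paper performs by writing out the K\"ahler--Poisson condition of $\mathcal{K}$ against elements $\hat y^J$ with $\phi(\hat y^J)=y^J$; this correctly yields $\tilde g(\alpha,\beta)=g^{\prime}(\alpha,\beta)$, which is condition (3). (The paper also records the symmetry of $\tilde g$, which is immediate from your closed form.)

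The problem is your step (i). You are right that Definition~\ref{definition2} presupposes that $\operatorname{Im}(\phi,\psi)$ is itself a K\"ahler--Poisson algebra, so some $\tilde\eta\in\phi(\mathcal{A})$ satisfying the K\"ahler--Poisson condition for $(\phi(\mathcal{A}),\tilde g,\{\phi(x^1),\ldots,\phi(x^m)\})$ must be produced --- a point the paper's proof passes over in silence. But your closure of this step does not go through. The telescoping shows that
\begin{equation*}
\{a^{\prime},\phi(x^i)\}^{\prime}\tilde g_{ij}\{\phi(x^j),\phi(x^k)\}^{\prime}\tilde g_{kl}\{\phi(x^l),b^{\prime}\}^{\prime}
=\{a^{\prime},y^J\}^{\prime}g^{\prime}_{JM}\{y^M,y^P\}^{\prime}g^{\prime}_{PQ}\{y^Q,b^{\prime}\}^{\prime},
\end{equation*}
so the element that manifestly returns $-\{a^{\prime},b^{\prime}\}^{\prime}$ is $\eta^{\prime}$, which need not lie in $\phi(\mathcal{A})$. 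To substitute $\phi(\eta)$ you appeal to Proposition~\ref{proposition4.2.3}; however, that proposition is stated and proved only for \emph{isomorphisms}: its proof uses identity \eqref{eq4.2.2} from Proposition~\ref{proposition4} and the cancellation step that expands $\phi^{-1}(y^\alpha)$ as a polynomial in the $x^i$, both of which require $\phi$ to be invertible. For a general homomorphism there is no reason that $(\phi(\eta)-\eta^{\prime})\mathcal{P}^{\prime}=0$, so ``the same argument closes the gap'' is not available. As written, step (i) is incomplete; one must either add the existence of such a $\tilde\eta$ as a hypothesis or supply a separate argument --- and the paper itself offers none, so your concern exposes an omission in the published proof rather than resolving it.
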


\begin{proof}
  First, we show that $\tilde{g}$ is symmetric.
  \begin{align*} 
    \tilde{g}_{kl}&=\phi(\eta\mathcal{P}_{km})\{\phi(x^m),y^J\}^{\prime}g^{\prime}_{JM}\phi(\eta\mathcal{P}_{ln})\{\phi(x^n),y^M\}^{\prime}\\&=\phi(\eta\mathcal{P}_{ln})\{\phi(x^n),y^M\}^{\prime}g^{\prime}_{JM}\phi(\eta\mathcal{P}_{km})\{\phi(x^m),y^M\}^{\prime}\\&=\phi(\eta\mathcal{P}_{ln})\{\phi(x^n),y^M\}^{\prime}g^{\prime}_{MJ}\phi(\eta\mathcal{P}_{km})\{\phi(x^m),y^M\}^{\prime},
  \end{align*} 
  since, $g^{\prime}_{JM}=g^{\prime}_{MJ}$ we obtain
  $\tilde{g}_{kl}=\phi(\eta\mathcal{P}_{lm})\{\phi(x^m),y^J\}^{\prime}g^{\prime}_{JM}\phi(\eta\mathcal{P}_{kn})\{\phi(x^n),y^M\}^{\prime}=\tilde{g}_{lk}$.

  As $\phi(\mathcal{A})$ is a subalgebra of $\mathcal{A}^{\prime}$, to
  show that $\operatorname{Im}(\phi,\psi)$ is a subalgebra of
  $\mathcal{K}^{\prime}$, we need to show that
  $(\operatorname{id}|_{\phi(\mathcal{A})},\operatorname{id}|_{\tilde{\mathfrak{g}}})$
  is a morphism of K\"ahler-Poisson algebras, where
  $\tilde{\mathfrak{g}}$ is module of derivations of
  $\operatorname{Im}(\phi,\psi)$. Now, let
  $\alpha=\alpha_i\{\phi(x^i),.\}^{\prime}$ and
  $\beta=\beta_j\{\phi(x^j),.\}^{\prime}$ be arbitrary elements of
  $\mathfrak{\tilde{g}}$ as usual, we have

  \begin{enumerate}
  \item $\psi(a\alpha)=a\alpha=\phi(a)\psi(\alpha)$, for all $\alpha\in \tilde{\mathfrak{g}}$ and $a \in \phi(\mathcal{A})$.
  \item $\phi(\alpha(a))=\alpha(a)=\psi(\alpha)(\phi(a))$, for all $\alpha\in \tilde{\mathfrak{g}}$ and $a \in \phi(\mathcal{A})$. 
  \item $\phi(\tilde{g}(\alpha,\beta))=g^{\prime}(\psi(\alpha),\psi(\beta))=g^{\prime}(\alpha,\beta)$, for all $\alpha,\beta\in \tilde{\mathfrak{g}}$ and $a \in \phi(\mathcal{A})$.
    \begin{equation} \label{eq1}
      \begin{split}
        \phi(\tilde{g}(\alpha,\beta))-g^{\prime}(\alpha,\beta)  =& \alpha_i\{\phi(x^i),\phi(x^k)\}^{\prime}\tilde{g}_{kl} \beta_j\{\phi(x^j),\phi(x^l)\}^{\prime}\\
        & -\alpha_i\{\phi(x^i),y^J\}^{\prime}g^{\prime}_{JM} \beta_j\{\phi(x^j),y^M\}^{\prime}.
      \end{split}
    \end{equation}
    Since $y^J\in \phi(\mathcal{A})$ and for each $J$, we can find
    $\hat{y}^J\in \mathcal{A}$ such that $\phi(\hat{y}^J)=y^J$.  We
    can write
    $\tilde{g}_{kl}=\phi(\eta\mathcal{P}_{km})\{\phi(x^m),\phi(\hat{y}^J)\}^{\prime}g^{\prime}_{JM}\phi(\eta\mathcal{P}_{ln})\{\phi(x^n),\phi(\hat{y}^M)\}^{\prime}$. We
    compute
    \begin{align*} 
      \phi(\tilde{g}(\alpha,\beta))&=\phi\Big(\eta\alpha_i\{x^i,x^k\}g_{km}\{x^m,x^p\}g_{pq}\{x^q,\hat{y}^J\}\Big)g^{\prime}_{JM}
                                     \phi\Big(\eta\beta_j\{x^j,x^l\}g_{lm}\{x^m,x^r\}g_{rs}\{x^s,\hat{y}^M\}\Big)\\&=\alpha_i\{\phi(x^i),y^J\}^{\prime}g^{\prime}_{JM}\beta_j\{\phi(x^j),y^M\}^{\prime},
    \end{align*}
    by \eqref{eq1} we get
    $ \phi(\tilde{g}(\alpha,\beta))=g^{\prime}(\alpha,\beta)$.
  \item $\phi(\mathcal{A})_{\text{fin}}=\phi(\mathcal{A}_{\text{fin}})\subseteq \mathcal{A}^{\prime}_{\text{fin}} $, since, $\phi(\mathcal{A})_{\text{fin}}$ is generated by $\{\phi(x^1),...,\phi(x^m)\}$
  \end{enumerate}
  Therefore,
  $(\operatorname{id}_{\phi(\mathcal{A})},\operatorname{id}_{\tilde{\mathfrak{g}}})$
  is a morphism of K\"ahler-Poisson algebras, and
  $\operatorname{Im}(\phi,\psi)$ is a subalgebra of
  $\mathcal{K}^{\prime}$.
\end{proof}

 \section{Direct sums and tensor products of K\"ahler-Poisson algebras}
 Let $\mathcal{K}=(\mathcal{A},g,\{x^1,...,x^m\})$ and
 $\mathcal{K}^{\prime}=(\mathcal{A}^{\prime},g^{\prime},\{y^1,...,{y^m}^{\prime}\})$
 be K\"ahler-Poisson algebras. We have seen in Section 4, when
 $\mathcal{K}$ is K\"ahler-Poisson subalgebra of
 $\mathcal{K}^{\prime}$. In this section, we are interested in
 defining operations of K\"ahler-Poisson algebras. In particular, we
 introduce direct sums and tensor products of K\"ahler-Poisson
 algebras and study properties of these operations. We prove that,
 direct sums and tensor products of two K\"ahler-Poisson algebras are
 K\"ahler-Poisson algebras and we show that $\mathcal{K}$ and
 $\mathcal{K}^{\prime}$ are subalgebras of the direct sum
 $\mathcal{K}\oplus \mathcal{K}^{\prime}$.
 
 Let us first recall some basic facts of direct sums and tensor products
 of Poisson algebras. Firstly, we recall that the tensor product
 $\mathcal{A}\otimes \mathcal{A}^{\prime}$ of two Poisson algebras
 $\mathcal{A}$ and $\mathcal{A}^{\prime}$ is a Poisson algebra with
 Poisson products
 \begin{align}
  \{a_1\otimes a_2,b_1\otimes b_2\}=\{a_1,b_1\}\otimes a_2 b_2+a_1b_1\otimes \{a_2,b_2\},
 \end{align} for $a_1,b_1\in \mathcal{A}$ and $a_2,b_2\in \mathcal{A}^{\prime}$
 (see \cite{Christian}).
 
 Secondly, the direct sum of two Poisson algebras
 $\mathcal{A}\oplus \mathcal{A}^{\prime}$ is a Poisson algebra with
 Poisson product.
 \begin{align}
   \{(a_1,a_2),(b_1,b_2)\}=(\{a_1,b_1\},\{a_2,b_2\})
  \end{align} for $a_1,b_1\in \mathcal{A}$ and $a_2,b_2\in \mathcal{A}^{\prime}$.
 
  In next proposition, we see that direct sums of K\"ahler-Poisson
  algebras are K\"ahler-Poisson algebra.
  \begin{proposition}\label{proposition5.1}
    Let $\mathcal{K}=(\mathcal{A},g,\{x^1,...,x^m\})$ and
    $\mathcal{K}^{\prime}=(\mathcal{A}^{\prime},g^{\prime},\{y^1,...,{y^m}^{\prime}\})$
    be K\"ahler-Poisson algebras, and set
    $\mathcal{K}\oplus\mathcal{K}^{\prime}=(\mathcal{A}\oplus\mathcal{A}^{\prime},\hat{g},\{z^1,...,z^{m+m^{\prime}}\})$
    where
    \begin{align*}
      z^I =
      \begin{cases}
        (x^I,0)      & \quad \text{if } I\in \{1,...,m\}\\
        (0,y^{I-m})      & \quad \text{if } I\in \{m+1,...,m+m^{\prime}\},
      \end{cases}    
    \end{align*}
    and
    \begin{align*}
      \hat{g}_{IJ} =
      \begin{cases}
        (g_{IJ},0)      & \quad \text{if } I,J\in \{1,...,m\}\\
        (0,g^{\prime}_{I-m,J-m})      & \quad \text{if } I,J\in \{m+1,...,m+m^{\prime}\}\\
        (0,0) & \quad \text{otherwise.}
      \end{cases}    
    \end{align*}
    Then $\mathcal{K}\oplus\mathcal{K}^{\prime}$ is a K\"ahler-Poisson algebra.
  \end{proposition}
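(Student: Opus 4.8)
The plan is to verify the K\"ahler--Poisson condition \eqref{eq:2.1} directly for $\mathcal{K}\oplus\mathcal{K}^{\prime}$, using the natural candidate $\hat\eta=(\eta,\eta^{\prime})\in\mathcal{A}\oplus\mathcal{A}^{\prime}$; note that no localization is needed, since $\eta$ and $\eta^{\prime}$ already lie in their respective algebras. First I would record two structural facts. Since $g$ and $g^{\prime}$ are symmetric, $\hat g$ is symmetric by inspection of the three cases defining it. More importantly, the direct-sum bracket $\{(a_1,a_2),(b_1,b_2)\}=(\{a_1,b_1\},\{a_2,b_2\})$ makes all cross-factor brackets vanish, so that the Poisson tensor $\hat{\mathcal{P}}^{IJ}=\{z^I,z^J\}$ equals $(\{x^I,x^J\},0)$ when $I,J\in\{1,\dots,m\}$, equals $(0,\{y^{I-m},y^{J-m}\}^{\prime})$ when $I,J\in\{m+1,\dots,m+m^{\prime}\}$, and equals $(0,0)$ otherwise. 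Thus $\hat{\mathcal{P}}$ and $\hat g$ are block-diagonal in a compatible way.

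Next I would fix arbitrary $a=(a_1,a_2)$ and $b=(b_1,b_2)$ and expand the left-hand side of \eqref{eq:2.1},
\begin{align*}
  \sum_{I,J,K,L=1}^{m+m^{\prime}}\hat\eta\,\{a,z^I\}\,\hat g_{IJ}\,\{z^J,z^K\}\,\hat g_{KL}\,\{z^L,b\}.
\end{align*}
The crucial step is to show this sum collapses onto the diagonal blocks. Since $\hat g_{IJ}$ vanishes unless $I,J$ lie in the same block, $\{z^J,z^K\}$ vanishes unless $J,K$ lie in the same block, and $\hat g_{KL}$ vanishes unless $K,L$ lie in the same block, a nonzero summand forces all four indices into a single block. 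Hence the sum splits into a contribution indexed by $\{1,\dots,m\}$ plus one indexed by $\{m+1,\dots,m+m^{\prime}\}$, with no cross terms.

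Finally I would evaluate each block using the componentwise multiplication of $\mathcal{A}\oplus\mathcal{A}^{\prime}$. In the first block $\{a,z^I\}=(\{a_1,x^I\},0)$ and $\{z^L,b\}=(\{x^L,b_1\},0)$, so every factor lives in the first component and the block contributes $\big(\eta\sum_{i,j,k,l=1}^{m}\{a_1,x^i\}g_{ij}\{x^j,x^k\}g_{kl}\{x^l,b_1\},\,0\big)=(-\{a_1,b_1\},0)$ by the K\"ahler--Poisson condition for $\mathcal{K}$; symmetrically the second block contributes $(0,-\{a_2,b_2\}^{\prime})$ by the condition for $\mathcal{K}^{\prime}$. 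Adding the two yields $-(\{a_1,b_1\},\{a_2,b_2\}^{\prime})=-\{a,b\}$, which is exactly the right-hand side of \eqref{eq:2.1}. I expect the only subtle point to be the index bookkeeping that pins all four summation indices to one block; once that decoupling is established, the verification reduces transparently to the two given K\"ahler--Poisson conditions.
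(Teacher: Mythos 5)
Your proposal is correct and follows essentially the same route as the paper: both verify the K\"ahler--Poisson condition for $\hat\eta=(\eta,\eta^{\prime})$ by using the block-diagonal structure of $\hat g$ and the vanishing of cross brackets to collapse the quadruple sum onto the two diagonal blocks, each of which is handled by the corresponding hypothesis on $\mathcal{K}$ and $\mathcal{K}^{\prime}$. The only cosmetic difference is that you pin all four indices to one block in a single step, whereas the paper eliminates the off-block terms in two successive stages.
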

  
  \begin{proof}
    Since $\mathcal{K}=(\mathcal{A},g,\{x^1,...,x^m\})$ and
    $\mathcal{K}^{\prime}=(\mathcal{A}^{\prime},g^{\prime},\{y^1,...,{y^m}^{\prime}\})$
    are K\"ahler-Poisson algebras then there exists
    $\eta\in \mathcal{A}$ and $\eta^{\prime}\in \mathcal{A}^{\prime}$
    such that
    \begin{align}
      &\sum\limits_{i,j,k,l=1}^m\eta\{a_1,x^i\}g_{ij}\{x^j,x^k\}g_{kl}\{x^l,b_1\}=-\{a_1,b_1\}\\&\sum\limits_{\alpha,\beta,\gamma,\delta=1}^{m^\prime}\eta^{\prime}\{a_2,y^\alpha\}^{\prime}g^{\prime}_{\alpha\beta}\{y^\beta,y^\gamma\}^{\prime}g^{\prime}_{\gamma\delta}\{y^\delta,b_2\}^{\prime}=-\{a_2,b_2\}^{\prime}.
    \end{align}
    where $a_1,b_1\in \mathcal{A}$ and
    $a_2,b_2\in\mathcal{A}^{\prime}$.  We would like to show that
    $\mathcal{K}\oplus\mathcal{K}^{\prime}$ satisfies the
    K\"ahler-Poisson condition \eqref{eq:2.1}; that is
    \begin{align}
      \sum\limits_{I,J,K,L}^{m+m^{\prime}} (\eta,\eta^{\prime})\{(a_1,a_2),z^I\}\hat{g}_{IJ}\{z^J,z^K\}\hat{g}_{KL}\{z^L,(b_1,b_2)\} =-\{(a_1,a_2),(b_1,b_2)\}
    \end{align} 
    for $a_1,b_1\in \mathcal{A}$ and $a_2,b_2\in \mathcal{A}^{\prime}$.
    
    Starting from the left hand side
    \begin{align*}
      \sum_{I,J,K,L}^{m+m'} (\eta,&\eta')\{(a_1,a_2),z^I\}\hat{g}_{IJ}\{z^J,z^K\}\hat{g}_{KL}\{z^L,(b_1,b_2)\}\\
                                  &=\sum\limits_{J,K,L=1}^{m+m^{\prime}}\sum\limits_{i=1}^m(\eta,\eta^{\prime})\{(a_1,a_2),z^i\}\hat{g}_{iJ}\{z^J,z^K\}\hat{g}_{KL}\{z^L,(b_1,b_2)\}\\&+\sum\limits_{J,K,L=1}^{m+m^{\prime}}\sum\limits_{\alpha=1}^{m^\prime}(\eta,\eta^{\prime})\{(a_1,a_2),z^{\alpha+m}\}\hat{g}_{\alpha+m,J}\{z^J,z^K\}\hat{g}_{KL}\{z^L,(b_1,b_2)\}\\
                                  &=\sum\limits_{K,L=1}^{m+m^{\prime}}\sum\limits_{i,j=1}^m(\eta,\eta^{\prime})\{(a_1,a_2),z^i\}\hat{g}_{ij}\{z^j,z^K\}\hat{g}_{KL}\{z^L,(b_1,b_2)\}\\
                                  &+\sum\limits_{K,L=1}^{m+m^{\prime}}\sum\limits_{\alpha,\beta=1}^{m^\prime}(\eta,\eta^{\prime})\{(a_1,a_2),z^{\alpha+m}\}\hat{g}_{\alpha+m,\beta+m}\{z^{\beta+m},z^K\}\hat{g}_{KL}\{z^L,(b_1,b_2)\},
    \end{align*}
    since $\hat{g}_{iK}=0$ when $K > m$, and $\hat{g}_{m+\alpha,K}=0$
    when $K\leq m$. Moreover, since $\{z^i,z^K\}=0$ and
    $\{z^{\alpha+m},z^L\}=0$ if $1\leq i\leq m$,
    $1\leq \alpha\leq m^{\prime}$, $K> m$ and $L\leq m$, then
    \begin{align*}  \sum\limits_{I,J,K,L}^{m+m^{\prime}} (\eta,&\eta^{\prime})\{(a_1,a_2),z^I\}\hat{g}_{IJ}\{z^J,z^K\}\hat{g}_{KL}\{z^L,(b_1,b_2)\}\\&=\sum\limits_{i,j,k,l=1}^m(\eta,\eta^{\prime})\{(a_1,a_2),z^i\}\hat{g}_{ij}\{z^j,z^k\}\hat{g}_{kl}\{z^l,(b_1,b_2)\}\\&+\sum\limits_{\alpha,\beta,\gamma,\delta=1}^{m^\prime}(\eta,\eta^{\prime})\{(a_1,a_2),z^{\alpha+m}\}\hat{g}_{\alpha+m,\beta+m}\{z^{\beta+m},z^{\gamma+m}\}\hat{g}_{\gamma+m,\delta+m}\{z^{\delta+m},(b_1,b_2)\}.
    \end{align*}
    and
    \begin{align*} \sum\limits_{I,J,K,L}^{m+m^{\prime}}
      (\eta,&\eta^{\prime})\{(a_1,a_2),z^I\}\hat{g}_{IJ}\{z^J,z^K\}\hat{g}_{KL}\{z^L,(b_1,b_2)\}\\&=\sum\limits_{i,j,k,l=1}^m(\eta,\eta^{\prime})\{(a_1,a_2),(x^i,0)\}(g_{ij},0)\{(x^j,0),(x^k,0)\}(g_{kl},0)\{(x^l,0),(b_1,b_2)\}\\&+\sum\limits_{\alpha,\beta,\gamma,\delta=1}^{m^\prime}(\eta,\eta^{\prime})\{(a_1,a_2),(0,y^\alpha)\}(0,g^{\prime}_{\alpha\beta}\{(0,y^\beta),(0,y^\gamma)\}(0,g^{\prime}_{\gamma\delta})\{(0,y^\delta),(b_1,b_2)\}\\&=\sum\limits_{i,j,k,l=1}^m(\eta,\eta^{\prime})(\{a_1,x^i\},0)(g_{ij},0)(\{x^j,x^k\},0)(g_{kl},0)(\{x^l,b_1\},0)\\&+\sum\limits_{\alpha,\beta,\gamma,\delta=1}^{m^\prime}(\eta,\eta^{\prime})(0,\{a_2,y^\alpha\}^{\prime})(0,g^{\prime}_{\alpha\beta})(0,\{y^\beta,y^\gamma\}^{\prime})(0,g^{\prime}_{\gamma\delta})(0,\{y^\delta,b_2\}^{\prime})\\&=\sum\limits_{i,j,k,l=1}^m(\eta\{a_1,x^i\}g_{ij}\{x^j,x^k\}g_{kl}\{x^l,b_1\},0)+\sum\limits_{\alpha,\beta,\gamma,\delta=1}^{m^\prime}(0,\eta^{\prime}\{a_2,y^\alpha\}^{\prime}g^{\prime}_{\alpha\beta}\{y^\beta,y^\gamma\}^{\prime}g^{\prime}_{\gamma\delta}\{y^\delta,b_2\}^{\prime})\\&=
      (-\{a_1,b_1\},0)+(0,-\{a_2,b_2\}^{\prime})=-(\{a_1,b_1\},\{a_2,b_2\}^{\prime})=-\{(a_1,a_2),(b_1,b_2)\},
    \end{align*}
    since $\mathcal{K}$ and $\mathcal{K}^{\prime}$ are
    K\"ahler-Poisson algebras. Therefore,
    $\mathcal{K}\oplus\mathcal{K}^{\prime}$ is a K\"ahler-Poisson
    algebra.
  \end{proof}
  
  Next, given two K\"ahler-Poisson algebras $ \mathcal{K}$ and
  $\mathcal{K}^{\prime}$, we show that they
  are K\"ahler-Poisson subalgebras of the direct sum
  $\mathcal{K}\oplus\mathcal{K}^{\prime}$.
  \begin{remark}
    The module of inner derivations of $\mathcal{A}\oplus\mathcal{A}^{\prime}$ can be written as:
    \begin{align*}
      \mathfrak{g}\oplus\mathfrak{g}^{\prime}=\{(\alpha,\beta):\alpha\in\mathfrak{g}\text{ and }\beta\in\mathfrak{g}^{\prime}\}
    \end{align*}
    since given
    $\tilde{\alpha}\in\mathfrak{g}\oplus\mathfrak{g}^{\prime}$
    \begin{align*}
      \tilde{\alpha}(c,d)=\sum_{I=1}^{m+m^{\prime}}(a_I,b_I)\{z^I,(c,d)\}&=\sum_{I=1}^{m}(a_I\{x^I,c\},0)+\sum_{I=m+1}^{m+m^{\prime}}(0,b_I\{y^{I-m},d\})\\&=(\alpha(c),\beta(d))=(\alpha,\beta)(c,d),
    \end{align*}
    with
    \begin{align*}
      \alpha = \sum_{I=1}^ma_I\{x^I,\cdot\}\quad\text{and}\quad
      \beta = \sum_{I=1}^{m'}b_{I+m}\{y^I,\cdot\}.
    \end{align*}
  \end{remark}
  
  \begin{proposition}\label{proposition5.3}
    Let $\mathcal{K}=(\mathcal{A},g,\{x^1,...,x^m\})$ and
    $\mathcal{K}^{\prime}=(\mathcal{A}^{\prime},g^{\prime},\{y^1,...,{y^m}^{\prime}\})$
    be K\"ahler-Poisson algebras, then $\mathcal{K}$ and
    $\mathcal{K}^{\prime}$ are K\"ahler-Poisson subalgebras of
    $\mathcal{K}\oplus\mathcal{K}^{\prime}$. %$=(\mathcal{A}\oplus\mathcal{A}^{\prime},\hat{g},\{z^1,...,z^{m+m^{\prime}}\})$.
  \end{proposition}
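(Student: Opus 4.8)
The plan is to apply Definition~\ref{definition2} to the embedded copy of $\mathcal{K}$ inside $\mathcal{K}\oplus\mathcal{K}^{\prime}$; the statement for $\mathcal{K}^{\prime}$ then follows by the symmetric argument, interchanging the roles of the two blocks. By Proposition~\ref{proposition5.1} we already know that $\mathcal{K}\oplus\mathcal{K}^{\prime}=(\mathcal{A}\oplus\mathcal{A}^{\prime},\hat{g},\{z^1,\dots,z^{m+m^{\prime}}\})$ is a K\"ahler-Poisson algebra, so it remains to identify $\mathcal{A}$ with the Poisson subalgebra $\mathcal{A}\times\{0\}=\{(a,0):a\in\mathcal{A}\}\subseteq\mathcal{A}\oplus\mathcal{A}^{\prime}$ and to check that the restricted identity map is a morphism in the sense of Definition~\ref{definition}.

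First I would verify condition (A). Writing $\phi(a)=(a,0)$, the relations $(a,0)(b,0)=(ab,0)$ and $\{(a,0),(b,0)\}=(\{a,b\},0)$ show that $\mathcal{A}\times\{0\}$ is a Poisson subalgebra; under this identification the distinguished elements correspond to $z^1,\dots,z^m$, the restriction of $\hat{g}$ to the indices $1,\dots,m$ is $(g_{ij},0)$, and $\eta$ maps to $(\eta,0)$, so the embedded copy is a K\"ahler-Poisson algebra isomorphic to $\mathcal{K}$. Using the remark preceding the proposition, $\mathfrak{g}$ embeds in $\mathfrak{g}\oplus\mathfrak{g}^{\prime}$ as the derivations $(\alpha,0)$, so $\psi(\alpha)=(\alpha,0)$ is the identity restricted to $\mathfrak{g}$, and it is a Lie algebra homomorphism because the bracket on $\mathfrak{g}\oplus\mathfrak{g}^{\prime}$ is componentwise.

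Next I would check conditions (1)--(4) of Definition~\ref{definition}. Conditions (1) and (2) are immediate from the componentwise action $(\alpha,0)(c,d)=(\alpha(c),0)$: for instance $\psi(a\alpha)=(a\alpha,0)=(a,0)(\alpha,0)=\phi(a)\psi(\alpha)$ and $\phi(\alpha(a))=(\alpha(a),0)=\psi(\alpha)(\phi(a))$. Condition (4) holds because $\phi(\mathcal{A}_\text{fin})$ is generated by $z^1,\dots,z^m$, a subset of the generators of $(\mathcal{A}\oplus\mathcal{A}^{\prime})_\text{fin}$. The step requiring genuine computation is condition (3): I would evaluate
\begin{align*}
\hat{g}(\psi(\alpha),\psi(\beta))=\sum_{I,J=1}^{m+m^{\prime}}(\alpha,0)(z^I)\,\hat{g}_{IJ}\,(\beta,0)(z^J),
\end{align*}
and use that $(\alpha,0)(z^I)=(\alpha(x^I),0)$ for $I\leq m$ while $(\alpha,0)(z^I)=(0,0)$ for $I>m$ (since $z^I=(0,y^{I-m})$ has vanishing first component), together with $\hat{g}_{IJ}=(g_{ij},0)$ for $I,J\leq m$. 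Only the block $I,J\leq m$ survives, giving $\hat{g}(\psi(\alpha),\psi(\beta))=(g(\alpha,\beta),0)=\phi(g(\alpha,\beta))$.

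The main obstacle --- really the only nontrivial point --- is condition (3), where one must see that the cross terms drop out. This relies on two features of the construction in Proposition~\ref{proposition5.1}: the block-diagonal form of $\hat{g}$ (its off-diagonal blocks vanish) and the fact that a derivation coming from $\mathfrak{g}$ annihilates the generators $z^{m+1},\dots,z^{m+m^{\prime}}$ belonging to the other summand. Once (3) is settled, Definition~\ref{definition2} applies and shows that $\mathcal{K}$ is a K\"ahler-Poisson subalgebra of $\mathcal{K}\oplus\mathcal{K}^{\prime}$; exchanging the two blocks gives the same conclusion for $\mathcal{K}^{\prime}$.
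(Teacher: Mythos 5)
Your proposal is correct and follows essentially the same route as the paper: the paper also embeds $\mathcal{A}$ as $\tilde{\mathcal{A}}=\{(a,0):a\in\mathcal{A}\}$, proves in one lemma that $(\tilde{\mathcal{A}},(g_{ij},0),\{(x^i,0)\})$ is a K\"ahler--Poisson subalgebra of the direct sum by checking conditions (1)--(4) with the same key step for (3) (block-diagonality of $\hat{g}$ together with $(\alpha,0)(0,y^J)=0$), and proves in a second lemma that this copy is isomorphic to $\mathcal{K}$ via $\phi(c)=(c,0)$. The only difference is organizational: you fold the two lemmas into a single argument and invoke symmetry for $\mathcal{K}^{\prime}$, exactly as the paper's closing remark does.
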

  
  To prove this proposition for $\mathcal{K}$ and
  $\mathcal{K}^{\prime}$, we first show the following results:
  \begin{lemma}\label{lemma1}
    Let $\mathcal{K}=(\mathcal{A},g,\{x^1,...,x^m\})$ and
    $\mathcal{K}^{\prime}=(\mathcal{A}^{\prime},g^{\prime},\{y^1,...,{y^m}^{\prime}\})$
    be K\"ahler-Poisson algebras, and let
    $\tilde{\mathcal{A}}=\{(a,0):a\in\mathcal{A}\}$.  Then
    $\tilde{\mathcal{K}}=(\tilde{\mathcal{A}},\tilde{g},\{(x^1,0),...,(x^m,0)\})$
    is a K\"ahler-Poisson subalgebra of
    $\mathcal{K}\oplus\mathcal{K}^{\prime}$,
    %$=(\mathcal{A}\oplus\mathcal{A}^{\prime},\hat{g},\{z^1,...,z^{m+m^{\prime}}\})$,
    where $\tilde{g}_{ij}=\big((g_{ij},0)\big)$.
  \end{lemma}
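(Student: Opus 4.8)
The plan is to verify the two conditions of Definition~\ref{definition2}, after first confirming that $\tilde{\mathcal{K}}$ is itself a K\"ahler--Poisson algebra. For condition (A), I would observe that the direct-sum bracket gives $\{(a,0),(b,0)\}=(\{a,b\},0)$ and the product gives $(a,0)(b,0)=(ab,0)$; hence $\tilde{\mathcal{A}}$ is closed under both operations and is a Poisson subalgebra of $\mathcal{A}\oplus\mathcal{A}^{\prime}$.

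To show $\tilde{\mathcal{K}}$ is a K\"ahler--Poisson algebra, I would take the witness $\tilde{\eta}=(\eta,0)\in\tilde{\mathcal{A}}$, where $\eta$ witnesses \eqref{eq:2.1} for $\mathcal{K}$. Substituting $\tilde{\eta}$, the generators $(x^i,0)$, and the metric entries $\tilde{g}_{ij}=(g_{ij},0)$ into \eqref{eq:2.1}, every factor lies in the first summand, so the entire sum collapses componentwise to $\bigl(\sum_{i,j,k,l}\eta\{a,x^i\}g_{ij}\{x^j,x^k\}g_{kl}\{x^l,b\},\,0\bigr)$, which equals $(-\{a,b\},0)=-\{(a,0),(b,0)\}$ by the K\"ahler--Poisson condition for $\mathcal{K}$. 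This is exactly the computation appearing in the first summand of the proof of Proposition~\ref{proposition5.1}.

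For condition (B), I would verify properties (1)--(4) of Definition~\ref{definition} for the pair $(\operatorname{id}|_{\tilde{\mathcal{A}}},\operatorname{id}|_{\tilde{\mathfrak{g}}})$ regarded as a map into $\mathcal{K}\oplus\mathcal{K}^{\prime}$. Properties (1) and (2) are immediate since both maps are (restrictions of) the identity, and (4) holds because $\tilde{\mathcal{A}}_{\text{fin}}$ is generated by the elements $(x^i,0)$, which are among the distinguished generators $z^I$ of $\mathcal{K}\oplus\mathcal{K}^{\prime}$. The real content is property (3): I must show $\tilde{g}(\alpha,\beta)=\hat{g}(\alpha,\beta)$ for all $\alpha,\beta\in\tilde{\mathfrak{g}}$. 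The key point is that any $\alpha=\sum_{i=1}^m(\tilde{a}_i,0)\{(x^i,0),\cdot\}\in\tilde{\mathfrak{g}}$ annihilates the complementary generators, i.e. $\alpha(z^J)=0$ for $J>m$, because $\{(x^i,0),(0,y^{J-m})\}=(0,0)$ in the direct sum. Hence in the expansion $\hat{g}(\alpha,\beta)=\alpha(z^I)\hat{g}_{IJ}\beta(z^J)$ only the indices $I,J\le m$ survive, and there $\hat{g}_{IJ}=(g_{IJ},0)=\tilde{g}_{IJ}$ while $\alpha(z^I)=\alpha((x^I,0))$ as in $\tilde{g}$; the two metrics therefore coincide on $\tilde{\mathfrak{g}}$.

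The only delicate step is property (3), and within it the identification of $\tilde{\mathfrak{g}}=\{(\alpha,0):\alpha\in\mathfrak{g}\}$ together with the vanishing of $\alpha$ on the ``wrong'' generators; the rest is componentwise bookkeeping that parallels Proposition~\ref{proposition5.1}. I would therefore concentrate the argument on making the block structure of $\hat{g}$ and of the direct-sum bracket explicit, after which the equality $\tilde{g}=\hat{g}|_{\tilde{\mathfrak{g}}}$, and hence the subalgebra claim, follow directly.
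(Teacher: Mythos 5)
Your proposal is correct and follows essentially the same route as the paper: confirm that $\tilde{\mathcal{A}}$ is a Poisson subalgebra and then check properties (1)--(4) of Definition~\ref{definition} for $(\operatorname{id}|_{\tilde{\mathcal{A}}},\operatorname{id}|_{\tilde{\mathfrak{g}}})$, with the content concentrated in property (3) via the block-diagonal structure of $\hat{g}$ and the vanishing $\{(x^i,0),(0,y^{J-m})\}=(0,0)$. Your additional step of exhibiting the witness $\tilde{\eta}=(\eta,0)$ to verify that $\tilde{\mathcal{K}}$ is itself a K\"ahler--Poisson algebra is a small but worthwhile supplement that the paper leaves implicit.
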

  
  \begin{proof}
    Firstly, $\tilde{\mathcal{A}}$ is a Poisson subalgebra of
    $\mathcal{A}\oplus\mathcal{A}^{\prime}$. Denote the derivation
    module
    $\tilde{\mathfrak{g}}=\{(\alpha,0)\in
    \mathfrak{g}\oplus\mathfrak{g}^{\prime}:\alpha \in \mathfrak{g}\}$
    as a submodule of
    $\mathfrak{g}\oplus\mathfrak{g}^{\prime}$. Secondly, we show that
    $(\text{id}|_{\tilde{A}},\text{id}|_{\tilde{\mathfrak{g}}})$ is a
    morphism of K\"ahler-Poisson algebras. Let
    $\phi=\text{id}|_{\tilde{A}}$ and
    $\psi=\text{id}|_{\tilde{\mathfrak{g}}}$, then by Definition
    \ref{definition} we get
    \begin{enumerate}
    \item $\psi((a,0)(\alpha,0))=\psi(a\alpha,0)=(a\alpha,0)$ and $\phi((a,0))\psi((\alpha,0))=(a,0)(\alpha,0)=(a\alpha,0)$.
      
    \item $\phi(\alpha(a,0))=\phi(a\alpha,0)=(a\alpha,0)$ and $\psi(\alpha,0)(\phi(a,0))=(\alpha,0)(a,0)=(a\alpha,0)$.
      
    \item We see that $\phi(\tilde{g}((\alpha,0),(\beta,0)))=\hat{g}(\psi(\alpha,0),\psi(\beta,0))$.
      \begin{align*}
        \phi(\tilde{g}((\alpha,0),(\beta,0)))&=\sum_{i,j=1}^{m}((\alpha,0)(x^i,0)(g_{ij},0)(\beta,0)(x^j,0))\\&=\sum_{i,j=1}^{m}(\alpha(x^i)g_{ij}\beta(x^j),0),
      \end{align*}
      and 
      \begin{align*}
        \hat{g}(\psi(\alpha,0),\psi(\beta,0))=\hat{g}((\alpha,0),(\beta,0))=(\alpha,0)(\tilde{x}^I,0)\hat{g}_{IJ}(\beta,0)(\tilde{x}^J,0).
      \end{align*}
      Since $\hat{g}_{IJ}$ is a diagonal block matrix, we get 
      \begin{align*}
	\hat{g}(\psi(\alpha,0),\psi(\beta,0))&=\sum_{I,J=1}^{m}(\alpha,0)(x^I,0)({g}_{IJ},0)(\beta,0)(x^J,0)\\&+\sum_{I,J=m+1}^{m+m^{\prime}}(\alpha,0)(0,y^I)(0,{g^{\prime}}_{I-m,J-m})(\beta,0)(0,y^J)
      \end{align*}
      since $(\alpha,0)(0,y^J)=0$, we get
      \begin{align*}
        \hat{g}(\psi(\alpha,0),\psi(\beta,0))=\sum_{i,j=1}^{m}(\alpha(x^i),0)(g_{ij},0)(\beta(x^j),0)=\sum_{i,j=1}^{m}(\alpha(x^i)g_{ij}\beta(x^j),0).
      \end{align*}
      
    \item By construction, we have $\phi(\tilde{\mathcal{A}}_\text{fin})\subseteq (\mathcal{A}\oplus\mathcal{A}^{\prime})_{\text{fin}} $.
    \end{enumerate}
    Therefore, $\tilde{\mathcal{K}}$ is a K\"ahler-Poisson subalgebra
    of $\mathcal{K}\oplus\mathcal{K}^{\prime}$.
  \end{proof}

  \begin{lemma}\label{lemma2}
    Let $\mathcal{K}=(\mathcal{A},g,\{x^1,...,x^m\})$ and
    $\tilde{\mathcal{K}}=(\tilde{\mathcal{A}},\tilde{g},\{(x^1,0),...,(x^m,0)\})$
    be K\"ahler-Poisson algebras as in Lemma~\ref{lemma1}, then $\mathcal{K}$ is
    isomorphic to $\tilde{\mathcal{K}}$.
  \end{lemma}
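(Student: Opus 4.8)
The plan is to exhibit an explicit isomorphism rather than argue abstractly. The natural candidate for the Poisson-algebra part is the map $\phi:\mathcal{A}\to\tilde{\mathcal{A}}$ defined by $\phi(a)=(a,0)$, together with $\psi:\mathfrak{g}\to\tilde{\mathfrak{g}}$ defined by $\psi(\alpha)=(\alpha,0)$. First I would check that $\phi$ is a Poisson-algebra isomorphism onto $\tilde{\mathcal{A}}$: it is $\mathbb{K}$-linear, it respects the (componentwise) product since $(a,0)(b,0)=(ab,0)$, and it respects the bracket since the direct-sum bracket gives $\{(a,0),(b,0)\}=(\{a,b\},0)$. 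Injectivity and surjectivity onto $\tilde{\mathcal{A}}=\{(a,0):a\in\mathcal{A}\}$ are immediate from the definition of $\tilde{\mathcal{A}}$.

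The cleanest way to finish is then to invoke Proposition~\ref{proposition4}. Since $\phi(x^i)=(x^i,0)$ is exactly the $i$-th distinguished element of $\tilde{\mathcal{K}}$, the generators are matched one-to-one, so the formal derivative ${A^i}_\alpha=\partial\phi(x^i)/\partial(x^\alpha,0)$ equals $\delta^i_\alpha$ and $A$ is the identity matrix. Moreover $\phi(g)=(\phi(g_{ij}))=((g_{ij},0))=\tilde{g}$ by the very definition of $\tilde{g}$ in Lemma~\ref{lemma1}. Hence the compatibility equation $\tilde{\mathcal{P}}\tilde{g}\tilde{\mathcal{P}}=\tilde{\mathcal{P}}A^T\phi(g)A\tilde{\mathcal{P}}$ of Proposition~\ref{proposition4} degenerates to the tautology $\tilde{\mathcal{P}}\tilde{g}\tilde{\mathcal{P}}=\tilde{\mathcal{P}}\tilde{g}\tilde{\mathcal{P}}$, while $\phi(\mathcal{A}_{\text{fin}})=\tilde{\mathcal{A}}_{\text{fin}}$ holds because $\phi$ is an isomorphism carrying the generators $x^i$ onto the generators $(x^i,0)$. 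Proposition~\ref{proposition4} then yields $\mathcal{K}\cong\tilde{\mathcal{K}}$ at once.

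Alternatively, one can verify properties (1)--(4) of Definition~\ref{definition} by hand: $\psi(\alpha)=(\alpha,0)$ does land in $\tilde{\mathfrak{g}}$ and is a Lie homomorphism because the bracket on $\mathfrak{g}\oplus\mathfrak{g}^{\prime}$ is componentwise; (1) and (2) reduce to $(a\alpha,0)=(a,0)(\alpha,0)$ and $(\alpha(a),0)=(\alpha,0)(a,0)$; and (3) is precisely the metric computation already carried out in the proof of Lemma~\ref{lemma1}, using that $\tilde{g}$ was defined as the componentwise image of $g$. I do not expect any genuine obstacle here: the only substantive point is the compatibility of the metrics in condition (3), and this is built into the definition $\tilde{g}_{ij}=(g_{ij},0)$. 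Using Proposition~\ref{proposition4} reduces the entire argument to the two observations $A=I$ and $\phi(g)=\tilde{g}$, after which the conclusion is automatic.
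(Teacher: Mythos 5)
Your proposal is correct, and your primary route is genuinely different from the paper's. The paper proves the lemma by hand: it takes the same maps $\phi(c)=(c,0)$ and $\psi(a_i\{x^i,\cdot\})=(a_i,0)\{(x^i,0),\cdot\}$ and verifies conditions (1)--(4) of Definition~\ref{definition} directly, the only nontrivial step being the explicit computation showing $\phi(g(\alpha,\beta))=\tilde{g}(\psi(\alpha),\psi(\beta))$, which works out componentwise because $\tilde{g}_{ij}=(g_{ij},0)$. Your main argument instead feeds $\phi$ into the characterization of Proposition~\ref{proposition4}: since the generators are matched bijectively by $\phi(x^i)=(x^i,0)$, the matrix $A$ is the identity and $\phi(g)=\tilde{g}$, so the compatibility equation $\tilde{\mathcal{P}}\tilde{g}\tilde{\mathcal{P}}=\tilde{\mathcal{P}}A^T\phi(g)A\tilde{\mathcal{P}}$ is a tautology and the isomorphism follows at once (Proposition~\ref{proposition4} also supplies $\psi$, and it agrees with your candidate). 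This buys brevity and makes clear that the lemma is a degenerate instance of the general isomorphism criterion, at the cost of depending on the heavier Proposition~\ref{proposition4}; the paper's direct verification is longer but self-contained. Your stated alternative of checking (1)--(4) by hand is exactly the paper's proof, so nothing is missing either way.
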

  
  \begin{proof}
    We see that there is a morphism
    $\phi:\mathcal{A}\rightarrow \tilde{\mathcal{A}}$ and
    $\psi:\mathfrak{g}\rightarrow \tilde{\mathfrak{g}}$ defined by
    $\phi(c)=(c,0)$ ($\phi$ is a Poisson algebra isomorphism, see
    Remark \ref{remark3.6}), for all $c\in \mathcal{A}$ and let
    $\alpha=a_i\{x^i,.\}$ we define
    $\psi(\alpha)=(a_i,0)\{(x^i,0),.\}$ and
    $\psi(\beta)=(b_i,0)\{(x^i,0),.\}$ for all
    $\alpha,\beta\in \mathfrak{g}$, we get
    \begin{enumerate}
    \item $\phi(c)\psi(\alpha)=(c,0)(a_i,0)\{(x^i,0),.\}=(ca_i,0)\{(x^i,0),.\}=\psi(c\alpha)$.
      
    \item $\psi(\alpha)(\phi(a))=(a_i,0)\{(x^i,0),(c,0)\}=(a_i,0)(\{x^i,c\},0)=(a_i\{x^i,c\},0)=\phi(a_i\{x^i,c\})=\phi(\alpha(c))$.
      
    \item We see that $\phi(g(\alpha,\beta))=\tilde{g}(\psi(\alpha),\psi(\beta))$ since
      \begin{align*}
        \phi(g(\alpha,\beta))=\sum_{i,j=1}^{m}(\alpha(x^i)g_{ij}\beta(x^j),0),
      \end{align*}
      and 
      \begin{align*}
        \tilde{g}(\psi(\alpha),\psi(\beta))&=\tilde{g}((a_l,0)\{(x^l,0),.\},(b_k,0)\{(x^k,0),.\})\\&=(a_l,0)\{(x^l,0),(x^i,0)\}\tilde{g}_{ij}(b_k,0)\{(x^k,0),(x^j,0)\}\\&=(a_l,0)(\{x^l,x^i\},0)(g_{ij},0)(b_k,0)(\{x^k,x^j\},0)\\&=(a_l\{x^l,x^i\},0)(g_{ij},0)(b_k\{x^k,x^j\},0)\\&=(\alpha(x^i),0)(g_{ij},0)(\beta(x^j),0)=(\alpha(x^i)g_{ij}\beta(x^j),0).
      \end{align*}
      
    \item By construction, we have $\phi(\mathcal{A}_\text{fin})=\tilde{\mathcal{A}}_{\text{fin}} $.
    \end{enumerate}
    Therefore, $\mathcal{K}$ isomorphic to $\tilde{\mathcal{K}}$. 
  \end{proof}

  \begin{proof}[Proof of Proposition \ref{proposition5.3}]
    To prove Proposition \ref{proposition5.3} we need the above
    results. Lemma \ref{lemma1} says that
    $\tilde{\mathcal{K}}=(\tilde{\mathcal{A}},\tilde{g},\{(x^1,0),...,(x^m,0)\})$
    is a K\"ahler-Poisson subalgebra of
    $\mathcal{K}\oplus\mathcal{K}^{\prime}$ and Lemma \ref{lemma2}
    says that $\mathcal{K}$ is isomorphic to
    $\tilde{\mathcal{K}}$. Therefore, we can consider $\mathcal{K}$ to
    be a K\"ahler-Poisson subalgebra of
    $\mathcal{K}\oplus\mathcal{K}^{\prime}$.
  \end{proof}

  \noindent
  Observe that, in the same way, $\mathcal{K}^{\prime}$ is a
  K\"ahler-Poisson subalgebra of
  $\mathcal{K}\oplus\mathcal{K}^{\prime}$.  The next proposition shows
  that the tensor product of K\"ahler-Poisson algebras is
  K\"ahler-Poisson algebra under certain conditions.
  
 \begin{proposition}
   Let $\mathcal{K}=(\mathcal{A},g,\{x^1,...,x^m\})$ and
   $\mathcal{K}^{\prime}=(\mathcal{A}^{\prime},g^{\prime},\{y^1,...,{y^m}^{\prime}\})$
   with
   \begin{center}
     $\sum\limits_{i,j,k,l=1}^m\eta\{a_1,x^i\}g_{ij}\{x^j,x^k\}g_{kl}\{x^l,b_1\}=-\{a_1,b_1\}$
  \end{center} and 
  \begin{center}
    $\sum\limits_{\alpha,\beta,\gamma,\delta=1}^{m^\prime}\eta^{\prime}\{a_2,y^\alpha\}^{\prime}g^{\prime}_{\alpha\beta}\{y^\beta,y^\gamma\}^{\prime}g^{\prime}_{\gamma\delta}\{y^\delta,b_2\}^{\prime}=-\{a_2,b_2\}^{\prime}$
  \end{center}
  be K\"ahler-Poisson algebras, and assume that there exist
  $\rho \in \mathcal{A}$ and $\rho^{\prime} \in \mathcal{A}^{\prime}$
  such that $\rho^2=\eta$ and $\rho^{\prime2}=\eta^{\prime}$. Set
  $\mathcal{K}\otimes\mathcal{K}^{\prime}=(\mathcal{A}\otimes\mathcal{A}^{\prime},\tilde{g},\{z^1,...,z^{m+m^{\prime}}\})$
  where
  \begin{align*}
    z^I =
    \begin{cases}
      x^I\otimes 1      & \quad \text{if } I\in \{1,...,m\}\\
      1\otimes y^{I-m}      & \quad \text{if } I\in \{m+1,...,m+m^{\prime}\},
    \end{cases}
  \end{align*}
  and
  \begin{align*}
   \tilde{g}_{IJ} =
    \begin{cases}
      \rho g_{IJ}\otimes 1      & \quad \text{if } I,J\in \{1,...,m\}\\
      1\otimes \rho^{\prime}g^{\prime}_{I-m,J-m}     & \quad \text{if } I,J\in \{m+1,...,m+m^{\prime}\}\\
      0 & \quad \text{otherwise}.
    \end{cases} 
  \end{align*}
  Then $\mathcal{K}\otimes\mathcal{K}^{\prime}$ is a K\"ahler-Poisson algebra.
  \end{proposition}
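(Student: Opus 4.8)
The plan is to verify the K\"ahler--Poisson condition \eqref{eq:2.1} for $\mathcal{K}\otimes\mathcal{K}^{\prime}$ directly, with the explicit choice of scalar $\tilde\eta = 1\otimes 1$, the whole point being that the two square roots $\rho,\rho^{\prime}$ have already absorbed $\eta,\eta^{\prime}$ into the metric $\tilde g$. First I would record the relevant Poisson brackets. Since $\{1,\cdot\}=0$ in any unital Poisson algebra, the tensor-product bracket yields $\{z^i,z^j\}=\{x^i,x^j\}\otimes 1$ for $i,j\in\{1,\dots,m\}$, yields $\{z^{m+\alpha},z^{m+\beta}\}=1\otimes\{y^\alpha,y^\beta\}^{\prime}$ for $\alpha,\beta\in\{1,\dots,m^{\prime}\}$, and gives $\{z^i,z^{m+\alpha}\}=0$ for every mixed pair. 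Hence the matrix $\mathcal{P}$ with entries $\{z^I,z^J\}$ is block diagonal, and by construction so is $\tilde g$. For a simple tensor $c=a_1\otimes a_2$ I would likewise compute $\{c,z^i\}=\{a_1,x^i\}\otimes a_2$ and $\{c,z^{m+\alpha}\}=a_1\otimes\{a_2,y^\alpha\}^{\prime}$, and symmetrically $\{z^L,d\}$ for $d=b_1\otimes b_2$.

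The key structural observation is that, because both $\tilde g$ and $\mathcal{P}$ are block diagonal, a term in the quadruple sum over $I,J,K,L$ can be nonzero only when $I,J,K,L$ all lie in the same block; the mixed contributions drop out identically. Thus the left-hand side of \eqref{eq:2.1} collapses into a sum over the first block $\{1,\dots,m\}$ plus a sum over the second block $\{m+1,\dots,m+m^{\prime}\}$.

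In the first block every factor lives in $\mathcal{A}\otimes 1$, and multiplying out the five tensor factors produces $\{a_1,x^i\}\,\rho g_{ij}\,\{x^j,x^k\}\,\rho g_{kl}\,\{x^l,b_1\}\otimes a_2 b_2$. Here I would invoke commutativity of $\mathcal{A}$ together with $\rho^2=\eta$ to merge the two copies of $\rho$ into a single $\eta$, after which the K\"ahler--Poisson condition for $\mathcal{K}$ reduces this block to $-\{a_1,b_1\}\otimes a_2 b_2$. The second block is treated identically, using $(\rho^{\prime})^2=\eta^{\prime}$ and the condition for $\mathcal{K}^{\prime}$, and yields $-\,a_1 b_1\otimes\{a_2,b_2\}^{\prime}$.

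Adding the two contributions gives $-\big(\{a_1,b_1\}\otimes a_2 b_2 + a_1 b_1\otimes\{a_2,b_2\}^{\prime}\big)$, which is exactly $-\{c,d\}$ by the definition of the Poisson bracket on $\mathcal{A}\otimes\mathcal{A}^{\prime}$. So with $\tilde\eta=1\otimes 1$ the condition holds on simple tensors, and since both sides are $\mathbb{K}$-bilinear in $(c,d)$ while simple tensors span $\mathcal{A}\otimes\mathcal{A}^{\prime}$, it holds for all $c,d$. A one-line check that $\tilde g$ is symmetric, inherited from the symmetry of $g$ and $g^{\prime}$ in each diagonal block, completes the verification. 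There is no serious obstacle here beyond careful bookkeeping of the tensor multiplication; the only genuine subtlety—and the reason the hypotheses $\rho^2=\eta$, $(\rho^{\prime})^2=\eta^{\prime}$ are imposed—is that the single scalar $\eta$ in \eqref{eq:2.1} is effectively sandwiched between the two copies of the metric, so it must be split evenly between them, which is precisely what the square roots $\rho,\rho^{\prime}$ accomplish, leaving $\tilde\eta=1\otimes 1$.
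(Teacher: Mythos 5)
Your proposal is correct and follows essentially the same route as the paper: verify the K\"ahler--Poisson condition with $\tilde\eta=1\otimes 1$, use the block-diagonality of $\tilde g$ and of the bracket matrix (mixed brackets $\{x^i\otimes 1,\,1\otimes y^\alpha\}=0$) to collapse the quadruple sum into the two diagonal blocks, and then merge the two copies of $\rho$ (resp.\ $\rho^{\prime}$) into $\eta$ (resp.\ $\eta^{\prime}$) so that each block reduces to the K\"ahler--Poisson condition of the corresponding factor. Your explicit remark that bilinearity extends the identity from simple tensors to all of $\mathcal{A}\otimes\mathcal{A}^{\prime}$ is a small point the paper leaves implicit, but it does not change the argument.
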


  \begin{proof}
    Let us show that $\mathcal{K}\otimes\mathcal{K}^{\prime}$ satisfies the K\"ahler-Poisson condition: \begin{align}
    \sum\limits_{I,J,K,L}^{m+m^{\prime}} \{a_1\otimes a_2,z^I\}\tilde{g}_{IJ}\{z^J,z^K\}\tilde{g}_{KL}\{z^L,b_1\otimes b_2\} =-\{a_1\otimes a_2,b_1\otimes b_2\}\end{align} 
    for $a_1,b_1\in \mathcal{A}$ and $a_2,b_2\in \mathcal{A}^{\prime}$.
    
    Starting from the left hand side
    \begin{align*}
      &\sum_{I,J,K,L}^{m+m'}
        \{a_1\otimes a_2,z^I\}\tilde{g}_{IJ}\{z^J,z^K\}\tilde{g}_{KL}\{z^L,b_1\otimes b_2\}\\
      &=\sum\limits_{J,K,L=1}^{m+m^{\prime}}\sum\limits_{i=1}^m\{a_1\otimes a_2,z^i\}\tilde{g}_{iJ}\{z^J,z^K\}\tilde{g}_{KL}\{z^L,b_1\otimes b_2\}\\
      &+\sum_{J,K,L=1}^{m+m'}\sum_{\alpha=1}^{m'}\{a_1\otimes a_2,z^{\alpha+m}\}\tilde{g}_{\alpha+m,J}\{z^J,z^K\}\tilde{g}_{KL}\{z^L,b_1\otimes b_2\}\\
      &=\sum_{K,L=1}^{m+m'}\sum_{i,j=1}^m\{a_1\otimes a_2,z^i\}\tilde{g}_{ij}\{z^j,z^K\}\tilde{g}_{KL}\{z^L,b_1\otimes b_2\}\\
      &+\sum\limits_{K,L=1}^{m+m^{\prime}}\sum\limits_{\alpha,\beta=1}^{m^\prime}\{a_1\otimes a_2,z^{\alpha+m}\}\tilde{g}_{\alpha+m,\beta+m}\{z^{\beta+m},z^K\}\tilde{g}_{KL}\{z^L,b_1\otimes b_2\}
    \end{align*}  
    since $\tilde{g}_{iK}=0$ when $K > m$, and
    $\tilde{g}_{m+\alpha,K}=0$ when $K\leq m$. Moreover, since
    $\{z^i,z^K\}=\{x^i\otimes 1,1\otimes y^{K-m}\}=0$ and
    $\{z^{\alpha+m},z^L\}=\{1\otimes y^{\alpha},x^L\otimes 1\}=0$ if
    $1\leq i\leq m$, $1\leq \alpha\leq m^{\prime}$, $K> m$ and
    $L\leq m$, then
    \begin{align*}
      \sum\limits_{I,J,K,L}^{m+m^{\prime}}&\{a_1\otimes a_2,z^I\}\tilde{g}_{IJ}\{z^J,z^K\}\tilde{g}_{KL}\{z^L,b_1\otimes b_2\}\\&=\sum\limits_{i,j,k,l=1}^m\{a_1\otimes a_2,z^i\}\tilde{g}_{ij}\{z^j,z^k\}\tilde{g}_{kl}\{z^l,b_1\otimes b_2\}\\&+\sum\limits_{\alpha,\beta,\gamma,\delta=1}^{m^\prime}\{a_1\otimes a_2,z^{\alpha+m}\}\tilde{g}_{\alpha+m,\beta+m}\{z^{\beta+m},z^{\gamma+m}\}\tilde{g}_{\gamma+m,\delta+m}\{z^{\delta+m},b_1\otimes b_2\},
    \end{align*}
    and
    \begin{align*}      \sum\limits_{I,J,K,L}^{m+m^{\prime}} &\{a_1\otimes a_2,z^I\}\tilde{g}_{IJ}\{z^J,z^K\}\tilde{g}_{KL}\{z^L,b_1\otimes b_2\}\\&=\sum\limits_{i,j,k,l=1}^m\{a_1\otimes a_2,x^i\otimes 1\}\rho g_{ij}\otimes 1\{x^j\otimes 1,x^k\otimes 1\}\rho g_{kl}\otimes 1\{x^l\otimes 1,b_1\otimes b_2\}\\&+\sum\limits_{\alpha,\beta,\gamma,\delta=1}^{m^\prime}\{a_1\otimes a_2,1\otimes y^\alpha\}1\otimes \rho^{\prime}g^{\prime}_{\alpha\beta}\{1\otimes y^\beta,1\otimes y^\gamma\}1\otimes \rho^{\prime}g^{\prime}_{\gamma\delta}\{1\otimes y^\delta,b_1\otimes b_2\}\\&=\sum\limits_{i,j,k,l=1}^m(\{a_1,x^i\}\otimes a_2)(\rho g_{ij}\otimes 1)(\{x^j,x^k\}\otimes 1)(\rho g_{kl}\otimes 1)(\{x^l,b_1\}\otimes b_2)\\&+\sum\limits_{\alpha,\beta,\gamma,\delta=1}^{m^\prime}(a_1\otimes \{a_2,y^\alpha\}^{\prime})(1\otimes \rho^{\prime}g^{\prime}_{\alpha\beta})(1\otimes\{ y^\beta,y^\gamma\}^{\prime})(1\otimes \rho^{\prime}g^{\prime}_{\gamma\delta})(b_1\otimes\{y^\delta, b_2\}^{\prime})\\&=\sum\limits_{i,j,k,l=1}^m\rho^2\{a_1,x^i\}g_{ij}\{x^j,x^k\}g_{kl}\{x^l,b_1\}\otimes a_2 b_2\\&+\sum\limits_{\alpha,\beta,\gamma,\delta=1}^{m^\prime} a_1b_1 \otimes \rho^{\prime2}\{a_2,y^\alpha\}^{\prime}g^{\prime}_{\alpha\beta}\{ y^\beta,y^\gamma\}^{\prime}g^{\prime}_{\gamma\delta}\{y^\delta, b_2\}^{\prime}\\&=\sum\limits_{i,j,k,l=1}^m(\eta\{a_1,x^i\}g_{ij}\{x^j,x^k\}g_{kl}\{x^l,b_1\}\otimes a_2 b_2)\\&+\sum\limits_{\alpha,\beta,\gamma,\delta=1}^{m^\prime}( a_1b_1 \otimes \eta^{\prime}\{a_2,y^\alpha\}^{\prime}g^{\prime}_{\alpha\beta}\{ y^\beta,y^\gamma\}^{\prime}g^{\prime}_{\gamma\delta}\{y^\delta, b_2\}^{\prime}),
    \end{align*}
    since $\rho^2=\eta$ and $\rho^{\prime2}=\eta^{\prime}$.  Finally,
    since $\mathcal{K}$ and $\mathcal{K}^{\prime}$ are
    K\"ahler-Poisson algebras we get
    \begin{align*}\sum\limits_{I,J,K,L}^{m+m^{\prime}}\{a_1\otimes a_2,z^I\}\tilde{g}_{IJ}\{z^J,z^K\}\tilde{g}_{KL}\{z^L,b_1\otimes b_2\}&=-\{a_1,b_1\}\otimes a_2b_2-a_1b_1\otimes\{a_2,b_2\}^{\prime}\\&=-\{a_1\otimes a_2,b_1\otimes b_2\}.
    \end{align*}    
    Therefore, $\mathcal{K}\otimes \mathcal{K}^{\prime}$ is a K\"ahler-Poisson algebra.
   \end{proof}

   \section{Summary}
   In this paper, we have introduced the concept of morphism of
   K\"ahler-Poisson algebras. We have recalled a few results from
   \cite{algebras}, in order to motivate and understand the concept of
   morphism of K\"ahler-Poisson algebras. We have studied properties
   of isomorphisms for K\"ahler-Poisson algebras and we illustrates
   with examples when two K\"ahler-Poisson algebras are isomorphic. We
   have used the concept of morphism to define subalgebras of
   K\"ahler-Poisson algebras and we have presented examples when
   $\mathcal{K}$ is a K\"ahler-Poisson subalgebra of
   $\mathcal{K}^{\prime}$, where $\mathcal{A}$ is a proper Poisson
   subalgebra of a finitely generated algebra
   $\mathcal{A}^{\prime}$. Finally, in Section 5, we have introduced
   direct sums and tensor products of K\"ahler-Poisson algebras and
   properties of these operations.
   
   %Let us end with a brief outlook.
   There are many open questions that one would like to investigate in
   future work. For instance, is there a natural way to study the
   moduli spaces of Poisson algebras; i.e. how many (non-isomorphic)
   K\"ahler-Poisson structures does there exist on a given Poisson
   algebra?

   \section*{Acknowledgements}
   I would like to thank my supervisor J. Arnlind for fruitful
   discussions and helpful comments. I would also like to thank my
   co-supervisor M. Izquierdo for ideas and
   discussions. %Thanks also to the Ministry of Higher Education and Scientific Research in Iraq for supporting me.
   The results in Section 3 are a part of my licentiate thesis
   \cite{thesis}.

\end{document}